\newtheorem{thm}{Theorem}[section]
\newtheorem{lem}[thm]{Lemma}
\newtheorem{cor}[thm]{Corollary}
\newtheorem{conj}[thm]{Conjecture}
\newtheorem{example}[thm]{Example}
\newtheorem{rem}[thm]{Remark}
\title{The Maximum Length and Isomorphism of Circuit Codes with Long Bit Runs}
\author{Kevin M. Byrnes
\thanks{E-mail:\texttt{dr.kevin.byrnes@gmail.com}}
}
\begin{document}
\maketitle

\begin{abstract}
Recently, Byrnes \cite{Byrnes2019} presented a formula for the maximum length of a symmetric circuit code that has a long bit run and odd spread.  
Here we show that the formula is also valid when the spread is even.  
We also establish that all maximum length symmetric circuit codes with long bit runs are isomorphic for
an infinite and nontrivial family of circuit codes, extending a previous result of Douglas \cite{Douglas2}.
\end{abstract}

\section{Introduction}
A $d$-dimensional circuit code of spread $k$  (also called a $(d,k)$ circuit code) is a cycle $C$ in $I(d)$, the graph of the $d$-dimensional hypercube, satisfying the distance property:
\begin{equation}
\label{eq1}
d_{I(d)}(x,y) \ge \min \{d_C(x,y),k\} \ \forall x,y\in C
\end{equation}
where $d_{I(d)}(x,y)$ denotes the minimum path length between vertices $x$ and $y$ in $I(d)$ (i.e. the Hamming distance) and $d_C(x,y)$ denotes the minimum path length between $x$ and $y$ in $C$.
Since $C$ is a subgraph of $I(d)$, $d_C(x,y)\ge d_{I(d)}(x,y) \ \forall x,y\in C$, so (\ref{eq1}) is equivalent to the pair of properties $\forall x,y \in C$: 
\begin{center}
\begin{minipage}{.7\textwidth}
\begin{itemize}
\setlength{\itemsep}{0pt}
\setlength{\parskip}{0pt}
\item [] $d_C(x,y)\le k \implies d_{I(d)}(x,y)=d_C(x,y), \text{ and }$
\item [] $d_C(x,y)\ge k \implies d_{I(d)}(x,y)\ge k.$ 
\end{itemize}
\end{minipage}
\end{center}
Intuitively, $C$ ``preserves distances'' up to $k$.
Circuit codes were first introduced in \cite{Kautz} as a type of error-correcting code, but have since been employed in a variety of applications, including:
rank-modulation schemes for flash memory \cite{Yehezkeally2012, Holroyd2017, Wang2017},
constructing worst-case examples for the analysis of combinatorial algorithms \cite{ByrnesDAM},
and analyzing the behavior of models for gene regulatory networks \cite{Zinovik2010}.
Circuit codes have been extensively studied in the case $k=2$, where they are called `snakes in the box' or `coils in the box' 
\cite{DanzerKlee, AK, JW, HS, Zemor, EL, vanZanten2001, Meyerson, Palombo, Allison}.
The general case where $k\in \mathbb{N}$ is surveyed in chapter 17 of \cite{Grunbaum} and studied in detail in 
\cite{Singleton, Klee, Douglas2, PN, Paterson, Hiltgen, Chebiryak, Hood, Byrnes, Byrnes2019}.

Vertices of $I(d)$ can be considered as vectors in $\{0,1\}^d$.  With the usual convention that every circuit code $C=(x_1,\ldots,x_N)$ of $I(d)$ begins with $x_1=\vec{0}$, a circuit $C$ is equivalent to its \emph{transition sequence} $T(C)=(\tau_1,\ldots,\tau_N)$ where $\tau_i$ denotes the single position in which $x_i$ and $x_{i+1}$ differ (with $x_{N+1}=x_1$).  The \emph{transition elements} of $T(C)$ are the unique members of $\{\tau_1,\ldots,\tau_N\}$, without loss of generality these are $\{1,\ldots,d\}$.  
A \emph{segment} of $T(C)$ is a cyclically consecutive subsequence $\omega=(\tau_i,\tau_{i+1},\ldots,\tau_j)$ (where subscripts $>N$ are reduced modulo $N$), 
and a segment $\omega$ is a \emph{bit run} if no transition element appears more than once.
E.g. $\omega=(1,2,3,4)$ is a bit run whereas $\omega=(1,2,3,1)$ is not.

Let $\varphi(C)$ denote the maximum length of a bit run in $T(C)$ , in general it is not true that every segment $\omega$ of $T(C)$ having length $|\omega|=\varphi(C)$ is a bit run.  Singleton \cite{Singleton} established the following formula relating the dimension $d$ of a circuit code $C$ and $\varphi(C)$.

\begin{thm}[\cite{Singleton} Theorem 3]
\label{dimension_lb}
Let $C$ be a $(d,k)$ circuit code with $|C|>2d$ and $\varphi(C)=k+l$ for $l\ge 2$.  Then $d\ge k+1+\lfloor \frac{k+l}{2}\rfloor$.
\end{thm}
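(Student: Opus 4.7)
The plan is to exploit the bit run via a single lookahead vertex $y$ at $C$-distance $k$ beyond the end of the bit run, and to apply the distance property to pairs $(y,x_m)$ where $x_m$ ranges over the vertices of the bit run; a small parity-sensitive refinement handles the case when $k+l$ is even.

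After reindexing $T(C)$ and relabeling coordinates, assume $\omega=(\tau_1,\ldots,\tau_{k+l})$ satisfies $\tau_j=j$ and $x_1=\vec 0$, so $x_{m+1}$ is the characteristic vector of $\{1,\ldots,m\}$. Set $y:=x_{2k+l+1}$ and $T:=\{\tau_{k+l+1},\ldots,\tau_{2k+l}\}$; a standard application of the distance property to any segment containing two equal transitions $\tau_a=\tau_b$ shows that $b-a\ge k+1$, so $|T|=k$. Partition $T=T_{\mathrm{old}}\sqcup T_{\mathrm{new}}$ with $T_{\mathrm{old}}:=T\cap\{1,\ldots,k+l\}$ and set $p:=|T_{\mathrm{old}}|$; a direct XOR-sum computation gives Hamming weight $|y|=2k+l-2p$. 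The possibility $d_C(y,x_1)<k$ is ruled out immediately: in that case the distance property forces $|y|=d_C(y,x_1)=|C|-(2k+l)$, which substituted into $d\ge(k+l)+(k-p)$ yields $d\ge|C|/2$, contradicting $|C|>2d$. Therefore $d_C(y,x_1)\ge k$, so $|y|\ge k$, giving $p\le\lfloor(k+l)/2\rfloor$ and
\[
d\ \ge\ (k+l)+(k-p)\ \ge\ k+\lceil(k+l)/2\rceil,
\]
which already matches the target $k+1+\lfloor(k+l)/2\rfloor$ when $k+l$ is odd.

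For $k+l$ even the display above falls short by one, exactly when $p=(k+l)/2$ (so $|y|=k$); otherwise the counting alone yields the target. Suppose for contradiction $d=(3k+l)/2$; then $T_{\mathrm{new}}$ must exhaust $\{k+l+1,\ldots,d\}$. Apply the distance property to $y'':=x_{2k+l+2}$: since $|C|>2d=3k+l$ with $|C|$ and $3k+l$ both even, one has $|C|\ge 3k+l+2$, so $d_C(y'',x_1)\ge k+1$ and hence $|y''|\ge k$. Since $|y''|=|y|\pm 1$, this forces $|y''|=k+1$, equivalently $\tau_{2k+l+1}$ flips a position where $y$ has value $0$. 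The $(k{+}1)$-gap condition also forbids $\tau_{2k+l+1}\in T$, leaving $\tau_{2k+l+1}\in\{k+l+1,\ldots,d\}\setminus T_{\mathrm{new}}=\emptyset$ — contradiction. Hence $d\ge(3k+l)/2+1=k+1+(k+l)/2$ in the even case as well.

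\textbf{Main obstacle.} The main subtlety is the parity-sensitive ``$+1$'' in the even case: a single-lookahead vertex is off by one exactly when $k+l$ is even, and recovering the extra unit requires stretching to the second lookahead vertex $y''$ while simultaneously verifying that the distance property still applies there (via the parity bound on $|C|$ coming from $|C|>2d$) and juggling the $(k{+}1)$-gap condition to rule out every option for $\tau_{2k+l+1}$. Keeping the spread, gap, and weight constraints aligned at $y''$ is where the argument is most delicate.
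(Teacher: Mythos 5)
Your proof is correct. Note that the paper itself gives no proof of this statement---it is imported verbatim as Theorem 3 of the cited Singleton reference---so there is no in-paper argument to compare against; your write-up is a sound, self-contained reconstruction of the standard counting argument (distinct transitions in the $k$ steps after the bit run, the weight computation $|y|=2k+l-2p$, the distance property at the lookahead vertex to bound $p$, and the one-step extension to $x_{2k+l+2}$ to recover the extra unit when $k+l$ is even), and each step, including the exclusion of $d_C(y,x_1)<k$ via the $d\ge |C|/2$ contradiction and the emptiness of $\{k+l+1,\ldots,d\}\setminus T_{\mathrm{new}}$ in the tight even case, checks out.
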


\noindent 
When $k$ and $l$ have opposite parity this lower bound simplifies to $d\ge \frac{1}{2}(3k+l+1)$.
Generalizable constructions exist where this inequality is tight \cite{Douglas2, Byrnes2019}, and we say ``$C$ has a long bit run'' to denote such instances.  
In addition, (\cite{Singleton} Theorem 1) also established that for any $(d,k)$ circuit code $C$ with $|C|>2(k+1)$, $\varphi(C)\ge k+2$.  

Let $K(d,k)$ denote the maximum length of a $(d,k)$ circuit code.  Clearly $K(d,k)$ is even (as we consider circuits), and it is easy to show that $K(d,k)\ge 2k$ when $d\ge k$.
Furthermore, it is known that $K(d,1)=2^d$, $K(d,2)\ge \frac{3}{10} 2^d$ \cite{AK}, and $K(d,k) = 2^{d-\Theta(\log^2 d)}$ as $d\to \infty$ \cite{ByrnesSpinu}. 
However, no general formula for $K(d,k)$ is known, and computing the exact value of $K(d,k)$ for specific $(d,k)$ pairs is a challenging problem \cite{Kochut, Ostergard}.  
In a notable exception to this rule, Douglas \cite{Douglas2} (building upon previous results of \cite{Singleton}) established exact formulas for $K(\lfloor \frac{3k}{2} \rfloor+2,k)$ for $k$ even or odd, and $K(\lfloor \frac{3k}{2}\rfloor +3,k)$ for $k$ odd and $\ge 9$.  In addition, he proved that when $k$ is odd, any two maximum length $(\lfloor \frac{3k}{2}\rfloor +2,k)$ circuit codes are isomorphic (\cite{Douglas2} Theorem 4).
Recently Byrnes \cite{Byrnes2019} observed that in each of the preceding cases where a formula for $K(d,k)$ was known, it was 
possible to construct a maximum length circuit code $C$ that possessed a long bit run and which was \emph{symmetric}, i.e. $\tau_i = \tau_{i+N/2}$ for $i=1,\ldots, N/2$ where $T(C)=(\tau_1,\ldots,\tau_N)$.  
Let $\mathcal{F}(d,k,r)$ denote the family of $(d,k)$ circuit codes having maximum bit run length $\varphi(C)\ge r$, let $K(d,k,r)= {\max \{|C| \mid C\in \mathcal{F}(d,k,r)\}}$, and let $S(d,k,r)={\max \{|C| \mid C\in \mathcal{F}(d,k,r) \text{ and } C \text{ is symmetric}\}}$.  
By assuming both symmetry and the existence of a long bit run, \cite{Byrnes2019} was able to extend the family of $(d,k)$ pairs for which a formula for the maximum length was known, proving the following.

\begin{thm}[\cite{Byrnes2019} Theorem 3.9]
\label{byrnes1}
Let $k$ be odd, let $l$ be even and $\ge 2$ with $k\ge 2l+1$, and let $d=\frac{1}{2}(3k+l+1)$.  
Then $S(d,k,k+l)=4k+2l$.
\end{thm}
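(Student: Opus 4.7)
My plan is to prove the two bounds $S(d,k,k+l) \ge 4k+2l$ and $S(d,k,k+l) \le 4k+2l$ separately.

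For the lower bound I would construct an explicit symmetric $(d,k)$ circuit code $C$ of length $4k+2l$ with $\varphi(C)\ge k+l$. Writing $m = 2k+l$ for the half-length, the first half begins with the bit run $(1, 2, \ldots, k+l)$ followed by a suffix of length $k$ that includes each of the $d - (k+l) = (k-l+1)/2$ transition elements outside the bit run (at least once) together with $(k+l-1)/2$ carefully chosen repeats of bit run elements; $T(C)$ is this half concatenated with itself, which is automatically symmetric. Verifying (\ref{eq1}) is a direct distance computation that uses the hypothesis $k \ge 2l+1$ to guarantee the suffix has enough room to avoid forbidden configurations.

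For the upper bound, let $C$ be symmetric with $\varphi(C)\ge k+l$ and $|C|=2m$, and normalize so that $(\tau_1,\ldots,\tau_{k+l}) = (1,\ldots,k+l)$ is a bit run. By symmetry $(\tau_{m+1},\ldots,\tau_{m+k+l})$ is a second copy. Write $\sigma_i = \tau_{k+l+i}$ and let $s = m - k - l$ denote the length of the suffix separating the two bit runs; the goal is to show $s \le k$. A useful preliminary is that Theorem \ref{dimension_lb} combined with $d = (3k+l+1)/2$ forces $\varphi(C) = k+l$ exactly, since any longer bit run of either parity would demand dimension at least $(3k+l+3)/2 > d$.

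Assuming for contradiction $s > k$, I would extract rigid structure on $\{\sigma_1,\ldots,\sigma_k\}$. The pair $(x_{k+l+1}, x_{2k+l+1})$ has cycle distance $k$ and therefore Hamming distance $k$, forcing $\sigma_1,\ldots,\sigma_k$ to be distinct. The pair $(x_1, x_{2k+l+1})$ (cycle distance $2k+l > k$) combined with the Hamming-weight computation $2k+l - 2a \ge k$ and the bound $k - a \le (k-l+1)/2$ on the number of non-bit-run elements available pins $a := |\{\sigma_1,\ldots,\sigma_k\}\cap\{1,\ldots,k+l\}|$ to exactly $(k+l-1)/2$; the pair $(x_{k+l}, x_{2k+l+1})$ then forces $k+l\notin\{\sigma_1,\ldots,\sigma_k\}$. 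Since $\varphi(C)=k+l$, extending the initial bit run forward through $\sigma_1$ must fail, giving $\sigma_1\in\{1,\ldots,k+l-1\}$, and a symmetric argument from the second bit run gives $\sigma_s\in\{1,\ldots,k+l-1\}$.

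The main obstacle is converting these local constraints into a global contradiction. I would propagate the same distance/maximality reasoning to the overlapping windows $(x_j, x_{m+1})$ for $j = 1,\ldots,k+l$ and to the symmetric windows adjacent to the second bit run, iteratively trimming the set of admissible values of $\sigma_{k+1}, \sigma_{k+2}, \ldots$. The hypothesis $k \ge 2l + 1$ is the key quantitative input: it guarantees enough bit run elements remain available to serve as the required repeats, so that any attempt to continue the suffix past length $k$ forces either a bit run of length exceeding $k+l$ (contradicting $\varphi(C)=k+l$) or a direct violation of (\ref{eq1}).
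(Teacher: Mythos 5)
First, a point of reference: Theorem \ref{byrnes1} is not proved in this paper at all --- it is imported verbatim from \cite{Byrnes2019} (Theorem 3.9), and the present paper only proves the even-$k$ analogue (Corollary \ref{cor1}), by citing \cite{Byrnes2019} for $S(d,k,k+l)\in\{4k+2l,4k+2l+2\}$ and then ruling out the larger value. So there is no in-paper proof to match your argument against; the relevant machinery is Lemma \ref{lem1}, Lemma \ref{lem2}, and the proof of Corollary \ref{cor1}. Measured against that machinery, your proposal is a sensible outline with two genuine gaps. On the lower bound, you never actually exhibit the code: ``carefully chosen repeats'' and ``a direct distance computation'' are exactly the content that has to be supplied. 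The construction that works is the interleaved sequence (\ref{eq8}), whose length-$k$ suffix begins with $2,4,\ldots,2(l-1)$ and then alternates the new elements $k+l+i$ with the even bit-run elements $2(l-1+i)$; verifying property (\ref{eq1}) for it is a nontrivial case analysis over all segment lengths, not a routine check, and the hypothesis $k\ge 2l+1$ enters there in a way your sketch does not make visible.

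On the upper bound, the structural facts you extract are correct and consistent with the paper: the $\sigma_i$ are distinct by (\ref{eq3}); exactly $(k+l-1)/2$ of them lie in the initial bit run, matching the counts in Lemma \ref{lem1}; $k+l\notin\{\sigma_1,\ldots,\sigma_k\}$; and $\varphi(C)=k+l$ exactly via Theorem \ref{dimension_lb}. But the final step --- ``propagate \ldots iteratively trimming the set of admissible values'' --- is precisely where the contradiction must be manufactured, and you do not produce it. The mechanism used in this paper for the even-$k$ case is the alternation result, Lemma \ref{lem2}: the $k+1$ transitions following $(\omega_1,\omega_2)$ alternate between elements occurring once in $(\omega_1,\omega_2)$ and elements occurring twice, and in a symmetric code of length $4k+2l+2$ those $k+1$ transitions are forced (by symmetry) to be $(\alpha,1,2,\ldots,k)$, which pins the parity classes of $1,\ldots,k$ and lets one name a concrete segment $\omega$ with $k\le|\omega|\le N-k$ but $\delta(\omega)=k-1$, violating (\ref{eq4}). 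Your sketch contains no analogue of this alternation lemma and exhibits no specific violating segment, so as written the argument does not close; note also that symmetry must be used essentially, since without it the bound $4k+2l$ is not known to hold.
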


In this article we extend the formula for $S(d,k,k+l)$ of Theorem \ref{byrnes1} to circuit codes of even spread $k$.
Furthermore, using the same conditions of symmetry and the existence of a long bit run, 
we identify a range of parameter values of $l$ for which there is a unique (up to isomorphism) maximum length symetric 
$(\frac{1}{2}(3k+l+1),k,k+l)$ circuit code, thereby extending Theorem 4 of \cite{Douglas2}
\footnote{When $k$ is odd and $l=2$, part $(ii)$ of Theorem \ref{byrnes2} equates to the prior isomorphism result of \cite{Douglas2}.}.
Our results are summarized in the following theorem.

\begin{thm}
\label{byrnes2}
Let $k$ and $l$ be integers $\ge 2$ of opposite parity with $k\ge 2l+1$ if $k$ is odd, and $k\ge 2l-2$ if $k$ is even, and let $d=\frac{1}{2}(3k+l+1)$.  Then:
(i) $S(d,k,k+l)=4k+2l$, and
(ii) for $l=2 \text{ or } 3$ there is a unique (up to isomorphism) symmetric circuit code in $\mathcal{F}(d,k,k+l)$ of length $S(d,k,k+l)$.
\end{thm}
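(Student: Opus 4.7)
The plan is to split Theorem~\ref{byrnes2} into three tasks: (a) a construction giving $S(d,k,k+l)\ge 4k+2l$ when $k$ is even (the odd-$k$ case being Theorem~\ref{byrnes1}); (b) a matching upper bound $S(d,k,k+l)\le 4k+2l$ in both parities; and (c) a uniqueness analysis for $l=2$ (which generalizes Theorem~4 of \cite{Douglas2}) and $l=3$.

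For (a), since a symmetric code of length $4k+2l$ has a half-transition sequence of length $2k+l$, and a long bit run consumes $k+l$ of these positions, I need to fill the remaining $k$ positions using the $\tfrac{k-l+1}{2}$ transition elements outside the bit run, together with some repetitions of bit-run elements. I would start from the construction used in \cite{Byrnes2019} for the odd-$k$, even-$l$ regime and adjust the parity of the filler block to handle the new case; with $k$ even and $l$ odd the natural target is a palindromic filler pattern about the midpoint of the half, which should make verification of \eqref{eq1} straightforward.

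For (b), I would argue by contradiction. Assume $C\in\mathcal{F}(d,k,k+l)$ is symmetric with $|C|\ge 4k+2l+2$. Because $d=\tfrac12(3k+l+1)$ is exactly Singleton's bound (Theorem~\ref{dimension_lb}), only $\tfrac{k-l+1}{2}$ transition elements lie outside the WLOG bit run $(1,2,\ldots,k+l)$, so these elements are forced to repeat frequently in the at least $k+1$ filler positions of the half-transition sequence. Through a case analysis driven by which bit-run element first precedes or follows a filler transition, and by the cyclic gap between successive copies of the same filler element, I expect to produce two vertices $x,y\in C$ with $d_C(x,y)\le k$ but $d_{I(d)}(x,y)<d_C(x,y)$, contradicting~\eqref{eq1}.

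For (c), with $l\in\{2,3\}$ the filler alphabet is minimal --- $\tfrac{k-1}{2}$ symbols when $l=2$, $\tfrac{k-2}{2}$ when $l=3$ --- and occupies exactly $k$ positions of the half-transition sequence. After extracting the skeleton forced by symmetry and the bit run, I would enumerate the legal fillers up to the natural group action (relabeling of filler symbols and cyclic reflection of $T(C)$), then invoke~\eqref{eq1} to rule out all but one equivalence class. The $l=2$ subcase should mirror Douglas's proof in \cite{Douglas2}; the $l=3$ subcase requires a parallel but distinct enumeration. The main obstacle will be (b): adapting the symmetry-plus-long-bit-run counting argument of \cite{Byrnes2019} to the even-$k$ case without leaving gaps, since the parity hypothesis there is used implicitly at several points, and when $k$ is even one must accommodate the new possibility that two occurrences of a transition element can be diametrically opposite in $T(C)$, which calls for a refined bookkeeping of $d_C$ between positions carrying the same transition element.
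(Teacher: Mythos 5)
Your decomposition (lower bound, upper bound, uniqueness) matches the paper's outline, but at every point where real work is required the proposal defers it, and in two places the deferral hides a genuine difficulty.

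First, the upper bound. The paper does not need to exclude all lengths $\ge 4k+2l+2$: it imports from \cite{Byrnes2019} (Lemma 3.6 and Corollary 4.3) that $S(d,k,k+l)\in\{4k+2l,\,4k+2l+2\}$, so the only new task is to kill the single value $4k+2l+2$. You set yourself the harder, open-ended task of ruling out every larger length via an unspecified ``case analysis driven by which bit-run element first precedes or follows a filler transition,'' and you say only that you \emph{expect} to produce a violating pair $x,y$. The actual argument is quite specific: one first proves (Lemma \ref{lem2}, a delicate induction on $(2)$--$(4)$) that the $k+1$ transitions following $(\omega_1,\omega_2)$ alternate between elements appearing once in $\omega_1$ and elements appearing twice in $(\omega_1,\omega_2)$; then, for a symmetric code of length $4k+2l+2$, symmetry forces those $k+1$ transitions to be $(\alpha,1,2,\ldots,k)$, the alternation pins down the parity classes of $1,\ldots,k$, and the segment $\omega=(3,4,\ldots,k+l,\omega_2,\alpha,1)$ is shown by a rearrangement/counting argument to have $\delta(\omega)=k-1$, contradicting $(4)$. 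Nothing in your sketch produces this (or any) concrete contradiction, and your remark about ``diametrically opposite'' occurrences of a transition element, while a reasonable worry, is not where the difficulty actually lies.

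Second, uniqueness. You propose to ``enumerate the legal fillers up to the natural group action,'' but the filler occupies $k$ positions and $k$ is a free parameter, so there is no finite enumeration to perform; the argument must be a structural induction parametric in $k$. That is exactly what the paper's Lemma \ref{lem3_2} does for $l=3$: it shows the pattern $\beta_i=i+1$ ($i$ odd), $\beta_i=k+3+\tfrac{i}{2}$ ($i$ even) persists until two consecutive even elements of $[k+3]$ occur, and then applies a nontrivial cyclic shift plus relabeling (illustrated in Example \ref{ex1}) to reach the canonical form. Your group action does include cyclic shifts, so the normalization is in principle available to you, but the inductive structure theorem that makes the ``enumeration'' collapse to one class is the entire content of the proof and is absent. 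Finally, for the lower bound you propose a new palindromic construction whose verification of $(1)$ you assert ``should be straightforward''; this is both unverified and unnecessary, since the existence of length-$(4k+2l)$ symmetric codes is already part of the imported result from \cite{Byrnes2019}.
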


\noindent Here, by isomorphism, we mean that circuit codes $C$ and $C'$ of $I(d)$ with respective transition sequences $T(C)$ and $T(C')$
are isomorphic if $T(C)$ can be transformed into $T(C')$ by applying only a cyclic shift of transitions and an automorphism of $[d]$.
Throughout this article we will use the phrases ``$C$ is isomorphic to $C'$'' and ``$T(C)$ is isomorphic to $T(C')$'' interchangably.

Taken together, the conclusions of Theorem \ref{byrnes2} establish that the symmetric circuit codes in $\mathcal{F}(d,k,k+l)$ (for $d,k,\text{and } l$ as defined in the theorem)
represent one of the few non-trivial families of circuit codes that are well-understood, having both an exact formula for the maximum length and a range of parameter values where the maximum length circuit code is unique up to isomorphism.
Section 2 establishes the formula $S(d,k,k+l)=4k+2l$ when $k$ is even,
and in Section 3 we prove that any two symmetric codes in $\mathcal{F}(d,k,k+l)$ of length $S(d,k,k+l)$ are isomorphic for $l\in\{2,3\}$.
In Section 4 we conclude and present an example showing that the range of values for $l$ in part (ii) of Theorem \ref{byrnes2} cannot be extended when $k$ is even.
For ease of notation we will denote an arbitrary indexed set $\{\alpha_1,\alpha_2,\ldots,\alpha_m\}$ by $[\alpha_m]$ (or by $[m]$ in the case $\{1,\ldots,m\}$), and more generally we denote $\{\alpha_i,\alpha_{i+1},\ldots,\alpha_j\}$ by $[\alpha_i,\alpha_j]$
(or by $[i,j]$ in the case $\{i,i+1,\ldots,j\}$, e.g. $[k+l+1,d] = \{k+l+1,k+l+2,\ldots,d\}$).

\section{ The Maximum Length of Symmetric Circuit Codes with Long Bit Runs}
Let $C=(x_1,\ldots,x_N)$ be a $(d,k)$ circuit code with transition sequence $T(C)=(\tau_1,\ldots,\tau_N)$.
For a segment $\omega$ of $T(C)$, let $\delta(\omega)$ denote the number of transition elements that appear an odd number of times in $\omega$.
So $\omega$ is a bit run if $\delta(\omega)=|\omega|$, and $\delta(T(C))=0$ as $C$ is a circuit.
Observe that for $\omega=(\tau_i,\ldots,\tau_{j-1})$, $\delta(\omega)$ is equal to the number of positions in which $x_i$ and $x_j \in C$ differ, and thus equals $d_{I(d)}(x_i,x_j)$.
Also, $\delta(\omega,\tau_j)=d_{I(d)}(x_i,x_{j+1})=d_{I(d)}(x_i,x_j) \pm 1$ as $x_j$ and $x_{j+1}$ are adjacent in $I(d)$.  Therefore:
\begin{equation}
\label{eq2}
\text{for } \omega=(\tau_i,\ldots,\tau_{j-1}), \delta(\omega,\tau_j) = \delta(\omega) \pm 1.
\end{equation}
\noindent For a segment $\omega=(\tau_i,\ldots,\tau_{j-1})$ of $T(C)$ define its \emph{complement}, $\omega^{\complement}$, as
$\omega^{\complement}=(\tau_j,\ldots,\tau_{i-1})$.  Since $C$ is a cycle it follows that
$\delta(\omega)=\delta(\omega^{\complement})$ and $d_C(x_i,x_j)=\min\{|\omega|,|\omega^{\complement}|\}$.
Furthermore, when $|C|=N>2k$ it follows from (\ref{eq1}) that:
\begin{equation}
\label{eq3}
\text{if } |\omega| \le k+1, \text{then } \delta(\omega)=|\omega|, \text{and }
\end{equation}
\begin{equation}
\label{eq4}
\text{if } k\le |\omega|\le N-k, \text{then } \delta(\omega)\ge k.
\end{equation}
\noindent Here the equality in (\ref{eq3}) when $|\omega|=k+1$ follows as a result of (\ref{eq1}) and (\ref{eq2}).  Now we recall a useful property of circuit codes with long bit runs.

\begin{lem}[\cite{Byrnes2019} Lemma 3.5]
\label{lem1}
Let $C\in \mathcal{F}(d,k,k+l)$ where $k$ and $l$ are integers of opposite parity, $d=\frac{1}{2}(3k+l+1)$, and having length $|C|>2d$.  Then any segment $\omega=(\tau_{r+1},\ldots,\tau_{r+2k+l})$ of $T(C)$ ($r\ge 0$) where $(\tau_{r+1},\ldots,\tau_{r+k+l})$ is a bit run has $\delta(\omega)=k+1$, $\delta(\omega,\tau_{r+2k+l+1})=k$, and all $d$ transition elements appear once or twice in $\omega$.  Also, $n_1=k$ elements appear exactly once in $(\omega,\tau_{r+2k+l+1})$ and $n_2=d-n_1=\frac{k+l+1}{2}$ elements appear twice in $(\omega,\tau_{r+2k+l+1})$.
\end{lem}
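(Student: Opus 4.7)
The plan is to decompose $\omega$ as the concatenation $B\cdot E$, where $B=(\tau_{r+1},\ldots,\tau_{r+k+l})$ is the given bit run and $E=(\tau_{r+k+l+1},\ldots,\tau_{r+2k+l})$ is the length-$k$ extension. The first move is to apply (\ref{eq3}) to $E$ itself: since $|E|=k\le k+1$, we learn that $E$ is also a bit run, so in $\omega$ every transition element occurs at most twice, occurring twice precisely when it lies in $B\cap E$. Setting $X:=|B\cap E|$ immediately yields the two working identities $\delta(\omega)=2k+l-2X$ and $|B\cup E|=2k+l-X$.

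The heart of the argument is a squeeze on $X$ from two sides. The dimension/pigeonhole bound $|B\cup E|\le d=\tfrac{1}{2}(3k+l+1)$ gives $X\ge \tfrac{1}{2}(k+l-1)$. In the other direction, (\ref{eq4}) applies to $\omega$ (valid because $k\le |\omega|=2k+l\le N-k$ under $|C|>2d$), yielding $\delta(\omega)\ge k$, i.e.\ $X\le \tfrac{1}{2}(k+l)$; because the opposite-parity hypothesis makes $k+l$ odd, this sharpens to $X\le \tfrac{1}{2}(k+l-1)$. The two bounds force equality, and substituting back produces $\delta(\omega)=k+1$ together with $|B\cup E|=d$, so all $d$ transition elements appear in $\omega$, each exactly once or twice.

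For the extra transition $\tau_{r+2k+l+1}$, I would look at the length-$(k+1)$ window $(\tau_{r+k+l+1},\ldots,\tau_{r+2k+l+1})$: by (\ref{eq3}) this window has $\delta=k+1$ and is therefore a bit run, so $e^*:=\tau_{r+2k+l+1}$ is distinct from every element of $E$. Since every transition element already lies in $B\cup E$, we must have $e^*\in B\setminus E$, where it currently contributes a single occurrence to $\omega$; appending it flips its parity from odd to even, producing $\delta(\omega,\tau_{r+2k+l+1})=k$. Tallying the three classes $(B\setminus E)\setminus\{e^*\}$, $E\setminus B$, and $(B\cap E)\cup\{e^*\}$ then gives the promised $n_1=k$ singletons and $n_2=\tfrac{1}{2}(k+l+1)$ doubles.

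The main obstacle is the $X$-squeeze: one must spot that the pigeonhole inequality on $|B\cup E|$ and the code-distance inequality (\ref{eq4}) pinch $X$ between two consecutive half-integers, and that the opposite-parity hypothesis on $(k,l)$ is exactly what rules out the larger value. Once that is in hand, everything else—identifying $E$ as a bit run, showing every element appears in $\omega$, and carrying out the extension-by-one argument—reduces to short direct counting based on the single observation that any length-$(k+1)$ window is automatically a bit run.
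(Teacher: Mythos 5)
Your proof is correct and complete: the decomposition $\omega=B\cdot E$, the observation that (\ref{eq3}) makes $E$ (and any length-$(k+1)$ window) a bit run, and the squeeze $\tfrac{1}{2}(k+l-1)\le X\le\tfrac{1}{2}(k+l-1)$ obtained from $|B\cup E|\le d$ on one side and (\ref{eq4}) plus the opposite-parity hypothesis on the other, together yield all the claimed conclusions, and the final tally of singletons and doubles in $(\omega,\tau_{r+2k+l+1})$ is right. The paper itself does not reproduce a proof of this lemma (it is cited from \cite{Byrnes2019}), but your argument is exactly the standard counting/pigeonhole argument on which such results rest, so there is nothing to flag.
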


\begin{rem}
\label{rem1}
Since $\delta(\omega)=k+1$ and $\delta(\omega,\tau_{r+2k+l+1})=k$, Lemma \ref{lem1} implies $\tau_{r+2k+l+1}$ appears once in $\omega$.  Therefore $n_1+1$ elements appear exactly once in $\omega$, and $n_2-1$ elements appear twice in $\omega$.
\end{rem}

Now suppose that $k$ and $l$ are integers of opposite parity with $d=\frac{1}{2}(3k+l+1)$, and let $C\in \mathcal{F}(d,k,k+l)$ with $T(C)=(\tau_1,\ldots,\tau_N)$.  Consider the segment $\omega=(\tau_1,\tau_2,\ldots,\tau_{2k+l})$, without loss of generality we may assume that $T(C)$ begins with a bit run of length $k+l$, so:
\begin{equation}
\label{eq5}
\omega=(\underbrace{1,2,\ldots,k+l}_{\omega_1},\underbrace{\tau_{k+l+1},\ldots,\tau_{2k+l}}_{\omega_2}).
\end{equation}
Observe that $\omega_1$ is a bit run of length $k+l$ by construction, and $\omega_2$ is also a bit run (of length $k$) by (\ref{eq3}).
By Lemma \ref{lem1}, all $d$ transition elements appear once or twice in $\omega$.  Let $S_1$ denote the set of transition elements appearing once in $\omega$, and let $S_2$ denote the set of transition elements appearing twice in $\omega$.  The next result shows that the $k+1$ transitions in $T(C)$ following $\omega$ alternate between $S_1$ and $S_2$.

\begin{lem}
\label{lem2}
Let $k$ and $l$ be integers of opposite parity and let $d=\frac{1}{2}(3k+l+1)$.  Let $C\in \mathcal{F}(d,k,k+l)$ with $|C|\ge 4k+l+1$ and suppose the first $2k+l$ transitions of $T(C)$ are $(\omega_1,\omega_2)$ where $\omega_1$ is a bit run of length $k+l$.  Let $S_j$ denote the set of transition elements appearing $j$ times in $(\omega_1, \omega_2)$ for $j=1,2$.  Then the segment $(\omega_1,\omega_2,\alpha_1,\alpha_2,\ldots,\alpha_{k+1})$ of $T(C)$ has $\alpha_i\in S_1\cap \omega_1$ if $i$ odd and $\alpha_i\in S_2$ if $i$ even.
\end{lem}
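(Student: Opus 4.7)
The plan is to proceed by induction on $i$, combining three tools: Lemma \ref{lem1} applied to $\omega$ itself, the fact from \eqref{eq3} that any $k+1$ consecutive transitions of $T(C)$ are distinct (which in particular forces $\alpha_1,\dots,\alpha_i$ to be pairwise distinct for $i\le k+1$), and the distance condition \eqref{eq1} applied between the specific vertex $x_{k+l+1}$ (the endpoint of the initial bit run $\omega_1$) and the running vertex $x_{2k+l+i+1}$.

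For the base case $i=1$, Lemma \ref{lem1} gives $\delta(\omega,\tau_{2k+l+1})=k<k+1=\delta(\omega)$, so $\alpha_1=\tau_{2k+l+1}$ must appear exactly once in $\omega$, i.e.\ $\alpha_1\in S_1$. Then applying \eqref{eq3} to the length-$(k+1)$ segment $(\omega_2,\alpha_1)$ forces its transitions to be distinct, so $\alpha_1\notin\omega_2$ and hence $\alpha_1\in S_1\cap\omega_1$.

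For the inductive step, set $\omega^{(j)}:=(\omega,\alpha_1,\dots,\alpha_j)$ and let $V_j\subseteq[d]$ be the set of coordinates where $x_{2k+l+j+1}$ equals one, so $|V_j|=\delta(\omega^{(j)})$. By the inductive hypothesis together with $\omega_1\cap\omega_2=S_2$, one computes $V_{i-1}=((S_1\cap\omega_1)\setminus O)\cup(S_1\cap\omega_2)\cup E$, where $O$ collects the odd-indexed and $E$ the even-indexed $\alpha_j$ with $j<i$. Using $|S_1\cap\omega_1|=(k+l+1)/2$, $|S_1\cap\omega_2|=(k-l+1)/2$, and pairwise distinctness of $O\cup E$, this yields $\delta(\omega^{(i-1)})=|V_{i-1}|$ equal to $k+1$ when $i$ is odd and $k$ when $i$ is even. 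The hypothesis of \eqref{eq4} holds for $\omega^{(i)}$ throughout, since $|C|\ge 4k+l+1$ and $i\le k+1$ keep both $|\omega^{(i)}|$ and $|(\omega^{(i)})^\complement|$ at least $k$; combined with \eqref{eq2}, this pins $\delta(\omega^{(i)})$ down tightly when $i$ is even but leaves two possibilities when $i$ is odd.

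For $i$ even, $\delta(\omega^{(i)})=k+1$ is forced, so $\alpha_i$ must have even count in $\omega^{(i-1)}$; parity bookkeeping combined with the distinctness of $\alpha_1,\dots,\alpha_i$ leaves only $\alpha_i\in S_2$, as desired. The main obstacle is the case $i$ odd (with $i\ge 3$), where \eqref{eq4} alone allows both $\delta(\omega^{(i)})=k$ and $\delta(\omega^{(i)})=k+2$. I rule out the latter and simultaneously pin $\alpha_i$ to $\omega_1$ by invoking the distance condition between $x_{k+l+1}$ and $x_{2k+l+i+1}$: their path distance in $C$ is $\min(k+i,N-k-i)\ge k$, so \eqref{eq1} gives $|V_i\triangle\omega_1|\ge k$. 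A direct calculation in the three cases $\alpha_i\in S_1\cap\omega_1$, $\alpha_i\in S_1\cap\omega_2$, $\alpha_i\in S_2$ yields $|V_i\triangle\omega_1|=k+1,\,k-1,\,k-1$ respectively, so only $\alpha_i\in S_1\cap\omega_1$ survives, closing the induction.
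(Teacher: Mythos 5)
Your proof is correct and is essentially the paper's argument: the paper also inducts on $i$ with a strengthened hypothesis tracking both $\delta(\omega_1,\omega_2,\alpha_1,\ldots,\alpha_i)$ and $\delta(\omega_2,\alpha_1,\ldots,\alpha_i)$, and your invocation of the distance condition between $x_{k+l+1}$ and $x_{2k+l+i+1}$ is exactly the latter quantity (since $|V_i\triangle\omega_1|=\delta(\omega_2,\alpha_1,\ldots,\alpha_i)$) bounded below via (\ref{eq4}). The only cosmetic difference is your explicit set-theoretic bookkeeping of the support $V_j$ in place of the paper's direct $\delta$ parity counts.
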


\begin{proof}
We demonstrate a slightly stronger claim:  $\alpha_i\in S_1\cap \omega_1$ and $\delta(\omega_1,\omega_2,\alpha_1,\ldots,\alpha_i)=k$ if $i$ is odd, and $\alpha_i\in S_2$ and $\delta(\omega_2,\alpha_1,\ldots,\alpha_i)=k$ if $i$ is even.  As $\omega_1$ and $\omega_2$ are bit runs, any transition element $t\in S_2$ appears exactly once in $\omega_1$ and exactly once in $\omega_2$.

Let $i=1$, then by Lemma \ref{lem1}, $\delta(\omega_1,\omega_2,\alpha_1)=k$ and $\delta(\omega_1,\omega_2)=k+1$.  Thus $\alpha_1$ occurs an odd number of times in $(\omega_1,\omega_2)$, so $\alpha_1\in S_1$.  Since $|(\omega_2,\alpha_1)|=k+1$, from (\ref{eq3}) we have $\delta(\omega_2,\alpha_1)=k+1$ and so $\alpha_1\not\in \omega_2$.  Thus $\alpha_1\in S_1\cap \omega_1$.  Next, let $i=2$.  Since $\delta(\omega_1,\omega_2,\alpha_1)=k$, $\delta(\omega_1,\omega_2,\alpha_1,\alpha_2)=k+1$ following from (\ref{eq2}) and (\ref{eq4}), and thus $\alpha_2$ occurs an even number of times in $(\omega_1,\omega_2,\alpha_1)$.  Clearly $\alpha_2\neq \alpha_1$, so $\alpha_2$ occurs an even number of times in $(\omega_1,\omega_2)$, thus $\alpha_2\in S_2$.  
Finally, since $\delta(\omega_2,\alpha_1)=k+1$, we have $\delta(\omega_2,\alpha_1,\alpha_2)\in \{k,k+2\}$ by (\ref{eq2}).  
As $\alpha_2\in S_2$, $\alpha_1\not\in\omega_2$, and all transitions in $\omega_2$ are distinct, we have $\delta(\omega_2,\alpha_1,\alpha_2)=k$.

Now suppose that the claim holds $\forall i\le m$, where $2\le m \le k$, and let $i=m+1$.  First suppose that $m$ is even.  By the inductive hypothesis $\delta(\omega_2,\alpha_1,\ldots,\alpha_m)=k$, and so $\delta(\omega_2,\alpha_1,\ldots,\alpha_{m+1})=k+1$ by (\ref{eq2}) and (\ref{eq4}).  As $\alpha_{m+1}\not\in[\alpha_{m}]$ (since $(\alpha_1,\ldots,\alpha_{m+1})$ is a segment of length $m+1\le k+1$) this implies that $\alpha_{m+1}$ occurs an even (possibly 0) number of times in $\omega_2$.  But since $\omega_2$ is a bit run this means $\alpha_{m+1}$ occurs 0 times, so $\alpha_{m+1}\in S_1\cap \omega_1$.  
Again using the inductive hypothesis, we have $\delta(\omega_1,\omega_2,\alpha_1,\ldots,\alpha_{m-1})=k$ and $\alpha_m\in S_2$, so $\delta(\omega_1,\omega_2,\alpha_1,\ldots,\alpha_m)=k+1$.  
Since $\alpha_{m+1}\in S_1\cap \omega_1$ and not in $[\alpha_m]$ this gives us $\delta(\omega_1,\omega_2,\alpha_1,\ldots,\alpha_{m+1})=k$.

Suppose that $m$ is odd.  By the induction hypothesis, $\delta(\omega_1,\omega_2,\alpha_1,\ldots,\alpha_m)=k$, so by (\ref{eq2}) and (\ref{eq4}) $\delta(\omega_1,\omega_2,\alpha_1,\ldots,\alpha_{m+1})=k+1$, and $\alpha_{m+1}$ must occur an even number of times in $(\omega_1,\omega_2,\alpha_1,\ldots,\alpha_m)$.  As $\alpha_{m+1}\not\in[\alpha_m]$ this implies $\alpha_{m+1}\in S_2$.  
Finally, by the inductive hypothesis we have $\delta(\omega_2,\alpha_1,\ldots,\alpha_{m-1})=k$ and $\alpha_m\in S_1\cap \omega_1$,
and since $\alpha_m\not\in [\alpha_{m-1}]$
this further implies $\delta(\omega_2,\alpha_1,\ldots,\alpha_m)=k+1$.  
Hence $\delta(\omega_2,\alpha_1,\ldots,\alpha_{m+1})\in \{k,k+2\}$ by (\ref{eq2}).
Since $\alpha_{m+1}\in S_2$, $\alpha_{m+1}\not\in [\alpha_m]$, and $\omega_2$ is a bit run, this implies $\alpha_{m+1}$ occurs exactly once in $(\omega_2,\alpha_1,\ldots,\alpha_m)$, so $\delta(\omega_2,\alpha_1,\ldots,\alpha_{m+1})=k$.

Thus we have proven the claim $\forall i\le k+1$.
\end{proof}

\begin{cor}
\label{cor1}
Let $k$ be even, let $l$ be odd and $\ge 3$ with $k\ge 2l-2$, and let $d=\frac{1}{2}(3k+l+1)$.  Then $S(d,k,k+l)=4k+2l$.
\end{cor}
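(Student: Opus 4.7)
The plan is to prove both bounds of $S(d,k,k+l)=4k+2l$ in turn. For the lower bound, I would adapt the symmetric construction underlying Theorem \ref{byrnes1} of \cite{Byrnes2019}: take $T(C)=(\omega_1,\omega_2,\omega_1,\omega_2)$ with $\omega_1=(1,2,\ldots,k+l)$ and $\omega_2$ a length-$k$ bit run whose element set contains all of $\{k+l+1,\ldots,d\}$ (the only way to satisfy Lemma \ref{lem1} under the forced sizes) together with $(k+l-1)/2$ further entries from $[k+l]$, with orderings chosen so that (\ref{eq3}) and (\ref{eq4}) hold throughout. The hypothesis $k\ge 2l-2$ supplies the room in $[d]$ that this choice of $\omega_2$ needs.

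For the upper bound I would argue by contradiction. Suppose $C\in\mathcal{F}(d,k,k+l)$ is symmetric with $N:=|C|>4k+2l$; since $N$ is even, write $N=2(2k+l+m)$ with $m\ge 1$. Without loss of generality the first $2k+l$ transitions form $(\omega_1,\omega_2)$ with $\omega_1=(1,\ldots,k+l)$, and Lemma \ref{lem2} then governs the next $k+1$ transitions $\alpha_1,\ldots,\alpha_{k+1}$: they alternate between $S_1\cap\omega_1$ (at odd indices) and $S_2$ (at even indices). The symmetry relation $\tau_i=\tau_{i+N/2}$ yields $\alpha_{m+j}=\tau_j=j$ for $j=1,\ldots,k+1-m$, and combining this with Lemma \ref{lem2} forces each integer in $\{1,\ldots,k+1-m\}$ into $S_1$ or $S_2$ according to the parity of $m+j$. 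Together with $\omega_2\setminus\omega_1=\{k+l+1,\ldots,d\}$ and $|S_2|=(k+l-1)/2$ from Lemma \ref{lem1}, this pins down the set of elements of $\omega_2$ very tightly.

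To finish, I would exhibit a segment $\sigma$ of length in $[k,N-k]$ with $\delta(\sigma)<k$, contradicting (\ref{eq4}). My candidate is $\sigma=(\tau_{k+1},\ldots,\tau_{N/2+1})$ of length $k+l+m+1$, whose final entry equals $\tau_1=1$ by symmetry. A direct parity count of element multiplicities in $\sigma$, using the forced composition of $\omega_2$ and the alternation of $\alpha$'s, should give $\delta(\sigma)=l$ when $m=1$; since the hypothesis $k\ge 2l-2$ together with $l\ge 3$ forces $k\ge l+1>l$, this is strictly less than $k$, delivering the contradiction in the sharpest case.

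The hard part will be extending this $\delta$-count uniformly to all $m\ge 2$. For larger $m$, the first-half alphas $\alpha_1,\ldots,\alpha_m$ also lie inside $\sigma$, and their exact values are only partially pinned down (by Lemma \ref{lem2}'s $S_1/S_2$ membership and by the bit-run property that any window of $k+1$ consecutive transitions is a bit run). My plan is to show that no permissible assignment of $\alpha_1,\ldots,\alpha_m$ can restore $\delta(\sigma)\ge k$; if this becomes technically awkward, I would instead apply Lemma \ref{lem2} at the ``second bit run'' $(\tau_{N/2+1},\ldots,\tau_{N/2+k+l})=\omega_1$ produced by symmetry, and extract the contradiction from a segment tailored to that shifted position.
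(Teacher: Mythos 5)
There is a genuine gap here, on two fronts. First, the paper does not re-derive either bound from scratch: it imports from \cite{Byrnes2019} (Lemma 3.6 and Corollary 4.3) the fact that $S(d,k,k+l)\in\{4k+2l,4k+2l+2\}$, which settles the lower bound and reduces the upper bound to excluding the single length $4k+2l+2$ (your $m=1$). Your proposal instead sketches a construction for the lower bound without verifying the spread condition, and for the upper bound attacks all $m\ge 1$ simultaneously, explicitly conceding that the case $m\ge 2$ is unresolved. That case never needs to be handled once the prior bound is invoked, but as written your argument is incomplete precisely where you admit it is.

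Second, and more seriously, your candidate segment for $m=1$ does not produce a contradiction. With $N=4k+2l+2$ and $T(C)=(\omega_1,\omega_2,\alpha,\omega_1,\omega_2,\alpha)$, your $\sigma=(\tau_{k+1},\ldots,\tau_{N/2+1})=(k+1,\ldots,k+l,\omega_2,\alpha,1)$ has length $k+l+2$. Applying Lemma \ref{lem2} to the $k+1$ transitions $(\alpha,1,2,\ldots,k)$ following $(\omega_1,\omega_2)$ shows $\alpha\in S_1\cap\omega_1$, the odd elements of $[k]$ lie in $S_2$ (hence in $\omega_2$), and the even ones lie in $S_1\cap\omega_1$; since $|S_2|=\frac{k+l-1}{2}$, exactly $\frac{l-1}{2}$ elements of $[k+1,k+l]$ recur in $\omega_2$, and $1$ recurs at the end of $\sigma$. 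The count gives $\delta(\sigma)=k+1$ unless $\alpha$ happens to lie in $[k+1,k+l]$ (in which case it is $k-1$), and nothing forces that; your claimed value $\delta(\sigma)=l$ is not correct. The paper's segment is $(3,4,\ldots,k+l,\omega_2,\alpha,1)$, of length $2k+l$: rearranged it is $(\omega_1,\omega_2,\alpha)$ with the unique occurrence of $2$ deleted (here one uses that $2\in S_1\cap\omega_1$ and $\alpha\neq 2$ by \eqref{eq3}), so $\delta=\delta(\omega_1,\omega_2,\alpha)-1=k-1$ by Lemma \ref{lem1}, contradicting \eqref{eq4} unconditionally. That choice of segment, together with the imported bound $S(d,k,k+l)\le 4k+2l+2$, is the content your proposal is missing.
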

\begin{proof}
From \cite{Byrnes2019} (Lemma 3.6 and Corollary 4.3) we have $S(d,k,k+l)\in {\{4k+2l,4k+2l+2\}}$.
Assume for contradiction that there exists symmetric $C\in \mathcal{F}(d,k,k+l)$ with $|C|=4k+2l+2$.
Without loss of generality
\begin{equation*}
T(C)=(\underbrace{1,2,\ldots,k+l}_{\omega_1},\underbrace{\beta_1,\ldots,\beta_k}_{\omega_2},\alpha, 1,2, \ldots,k+l,\beta_1,\ldots,\beta_k,\alpha).
\end{equation*}
\noindent Observe that as $(\alpha,1,2)$ is a segment of $T(C)$ of length $3\le k+1$, by (\ref{eq3}) $\alpha\neq 2$.
Now consider the segment $(\alpha,1,2,\ldots,k)$ of $T(C)$, and define $S_1$ and $S_2$ as in Lemma \ref{lem2}.
It follows that $\alpha\in S_1\cap \omega_1$, and for $1\le i \le k$ that $i\in S_1\cap \omega_1$ if $2\mid i$ and $i\in S_2$ if $2\nmid i$.
Next, consider the segment $\omega=(3,4,\ldots,k+l,\omega_2,\alpha,1)$.
Rearranging the order of the sequence $\omega$ does not change $\delta(\omega)$, so $\delta(\omega)=(1,3,4,\ldots,k+l,\omega_2,\alpha)=\delta(\omega_1,\omega_2,\alpha)-1$ since 2 occurs only once in $(\omega_1,\omega_2,\alpha)$ (specifically $2\in S_1\cap \omega_1$ and $\alpha \neq 2$).
By Lemma \ref{lem1}, $\delta(\omega_1,\omega_2,\alpha)=k$, so $\delta(\omega)=k-1$, but as $k\le |\omega|\le N-k$ this contradicts (\ref{eq4}), hence no such $C$ exists.
\end{proof}

\section{Isomorphism of Maximum Length Circuit Codes}
Throughout this section we assume that $k$ and $l$ are integers $\ge 2$ of opposite parity with: $l\ge 2$, $k\ge 2l-2$ if $2\mid k$ and $k\ge 2l+1$ if $2 \nmid k$, and let $d=\frac{1}{2}(3k+l+1)$.
By Theorem \ref{byrnes1} and Corollary \ref{cor1}, any maximum length symmetric $C\in \mathcal{F}(d,k,k+l)$ has $|C|=4k+2l$, and we may assume $T(C)$ has the form:

\begin{equation}
\label{eq6}
T(C)=(\underbrace{1,2,\ldots,k+l}_{\omega_1},\underbrace{\beta_1,\beta_2,\ldots,\beta_k}_{\omega_2},\omega_1,\omega_2).
\end{equation}
\noindent The segment $\omega_2$ in (\ref{eq6}) corresponds to the segment $\omega_2$ from (\ref{eq5}), and is a bit run by (\ref{eq3}) as previously observed.

Now consider the assumption:
\begin{equation}
\label{eq7}
\beta_i=2i \text{ for } 1\le i\le l-1.
\end{equation}

\noindent We will show (Lemma \ref{lem3_2}) that assumption (\ref{eq7}) holds when $l=2$ or $3$.  Furthermore, we establish (Lemma \ref{lem3_4}) that any maximum length $C\in \mathcal{F}(d,k,k+l)$ satisfying assumption (\ref{eq7}) must have $T(C)$ isomorphic to:

\begin{equation}
\label{eq8}
(\underbrace{1,2,\ldots,k+l}_{\omega_1},\underbrace{2,4,\ldots,2(l-1)}_{\omega_2^A},\underbrace{\gamma_1,\epsilon_1,\gamma_2,\epsilon_2,\ldots,\gamma_m,\epsilon_m}_{\omega_2^B},\omega_1,\omega_2^A,\omega_2^B)
\end{equation}
where $m=\frac{1}{2}(k-l+1)$, $\gamma_i=k+l+i$ and $\epsilon_i=2(l-1+i)$ for $1\le i\le m$, thus proving part (ii) of Theorem \ref{byrnes2}.

\begin{lem}
\label{lem3_1}
Consider the segments of $T(C): \rho_1=(r,r+1,\ldots,k+l,\beta_1,\ldots,\beta_k)$ for $1\le r\le k+l$, and 
$\rho_2=(\beta_1,\ldots,\beta_k,1,\ldots,s)$ for $1\le s \le k+l$, and let 
$n_1'=|[r,k+l]\cap [\beta_k]|$ and $n_2'=|[s]\cap [\beta_k]|$.  
Then $n_1'\le \lfloor \frac{1}{2}(k+l-r+1)\rfloor$ and $n_2'\le \lfloor \frac{s}{2} \rfloor$.
\end{lem}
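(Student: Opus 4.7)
The plan is to compute $\delta(\rho_1)$ and $\delta(\rho_2)$ in two different ways and equate them. The first computation uses the bit-run structure of $\omega_1$ and $\omega_2$ to express $\delta(\rho_i)$ in closed form in terms of $n_i'$; the second invokes the circuit-code property (\ref{eq4}) to bound $\delta(\rho_i)$ below by $k$. Matching these inequalities will yield the claimed upper bounds on $n_1'$ and $n_2'$.

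First I would verify the length hypotheses needed to apply (\ref{eq4}). For $\rho_1$ we have $|\rho_1|=(k+l-r+1)+k=2k+l-r+1$, which for $1\le r\le k+l$ lies between $k+1$ and $2k+l$; for $\rho_2$ we have $|\rho_2|=k+s$, which for $1\le s\le k+l$ lies in the same range. Since $|C|=4k+2l$, both lengths satisfy $k\le|\rho_i|\le N-k$, so (\ref{eq4}) applies and gives $\delta(\rho_i)\ge k$.

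Next I would compute $\delta(\rho_1)$ directly. Because $\omega_1=(1,2,\ldots,k+l)$ is a bit run, the prefix $(r,r+1,\ldots,k+l)$ of $\rho_1$ consists of $k+l-r+1$ distinct transition elements each appearing once, and because $\omega_2$ is a bit run the suffix $(\beta_1,\ldots,\beta_k)$ contributes $k$ further occurrences, each of a distinct element. Consequently an element of $\rho_1$ appears exactly twice iff it lies in $[r,k+l]\cap[\beta_k]$ and otherwise exactly once, so $\delta(\rho_1)=|\rho_1|-2n_1'=(2k+l-r+1)-2n_1'$. Combined with $\delta(\rho_1)\ge k$ this gives $n_1'\le\tfrac{1}{2}(k+l-r+1)$, and integrality produces the floor. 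A symmetric argument for $\rho_2=(\beta_1,\ldots,\beta_k,1,\ldots,s)$ yields $\delta(\rho_2)=(k+s)-2n_2'\ge k$, hence $n_2'\le\lfloor s/2\rfloor$.

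The only real obstacle is the bookkeeping that justifies the closed form $\delta(\rho_i)=|\rho_i|-2n_i'$: I must be sure that no element appears more than twice in $\rho_i$ and that the ``twice'' elements are precisely the ones in the stated intersection. Both facts follow immediately from $\omega_1$ and $\omega_2$ being bit runs, so once the length ranges are checked the rest of the argument is a short calculation. I do not expect any case distinctions or additional hypotheses beyond what is already available from Lemma \ref{lem1} and (\ref{eq4}).
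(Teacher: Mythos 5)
Your proposal is correct and follows essentially the same route as the paper: apply (\ref{eq4}) to get $\delta(\rho_i)\ge k$, then use the fact that $\omega_1$ and $\omega_2$ are bit runs to write $\delta(\rho_i)=|\rho_i|-2n_i'$ and solve for $n_i'$. The extra verification that $k\le|\rho_i|\le N-k$ is a worthwhile detail the paper leaves implicit, but the argument is otherwise identical.
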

\begin{proof}
Observe that $|\rho_1|=2k+l-r+1$ and $|\rho_2|=k+s$, hence by (\ref{eq4}) we have $\delta(\rho_1)\ge k$ and $\delta(\rho_2)\ge k$.  As no transition is repeated within $\omega_1$ nor $\omega_2$, this implies $k\le \delta(\rho_1)=2k+l-r+1-2n_1'$ and $k\le \delta(\rho_2)=k+s-2n_2'$.  Hence $2n_1'\le k+l-r+1$ and $2n_2'\le s$.
\end{proof}

\begin{lem}
\label{lem3_2}
Assumption (\ref{eq7}) is valid for $l=2 \text{ or } 3$.
\end{lem}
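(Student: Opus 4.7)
The plan is to apply (\ref{eq3}) to carefully chosen length-$(k+1)$ segments straddling the boundary between $\omega_2$ and the second copy of $\omega_1$, combining the resulting bit-run constraints with the set-membership information supplied by Lemma \ref{lem2}.

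First I prove $\beta_1 = 2$, which handles both $l = 2$ and $l = 3$. The segment $(\beta_2, \beta_3, \ldots, \beta_k, 1, 2)$ has length $k+1$, so by (\ref{eq3}) it is a bit run. By Lemma \ref{lem2} (applied with $i = 2$), $2 \in S_2 \subseteq \omega_2$, so $2 = \beta_j$ for some $j$. The distinctness of the bit run rules out $j \ge 2$, forcing $\beta_1 = 2$.

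For $l = 3$ I then establish $\beta_2 = 4$. The same idea applied to the length-$(k+1)$ segment $(\beta_4, \beta_5, \ldots, \beta_k, 1, 2, 3, 4)$, together with $4 \in \omega_2$ (from Lemma \ref{lem2} with $i = 4$) and $\beta_1 = 2 \neq 4$, yields $4 \in \{\beta_2, \beta_3\}$. I rule out $\beta_3 = 4$ by splitting on the value of $\beta_2$. If $\beta_2 \in \omega_1 \cap \omega_2 \setminus \{2, 4\} = \{6, 8, \ldots, k+2\}$, consider the segment $\omega^* = (2, 3, \ldots, k+l, \beta_1, \beta_2, \beta_3) = (2, \ldots, k+l, 2, \beta_2, 4)$ of length $k+l+2$: each of $2$, $4$, and $\beta_2$ appears exactly twice in $\omega^*$ (once inside $\{2, \ldots, k+l\}$ and once among $\beta_1, \beta_2, \beta_3$), while the remaining $k+l-4$ transitions in $\{3, 5, 6, \ldots, k+l\} \setminus \{\beta_2\}$ each appear once, giving $\delta(\omega^*) = k-1$ and contradicting (\ref{eq4}). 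In the remaining sub-case $\beta_2 \in \{k+4, \ldots, d\}$, the segment $(3, 4, \ldots, k+l, \beta_1, \beta_2) = (3, \ldots, k+l, 2, \beta_2)$ has length $k+l$ with all entries distinct, exhibiting a second bit run of length $k+l$ in $T(C)$. Cyclically shifting $T(C)$ to begin at an appropriate such alternative bit run and relabeling $[d]$ so that this bit run becomes $(1, 2, \ldots, k+l)$ yields an equivalent representation of the same code in form (\ref{eq6}) with $\beta_2 = 4$, so assumption (\ref{eq7}) holds up to isomorphism.

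The main obstacle is this last sub-case: when $\beta_3 = 4$ and $\beta_2 \in \{k+4, \ldots, d\}$, the configuration is genuinely consistent with (\ref{eq3})--(\ref{eq4}) and the symmetry/long-bit-run assumptions, representing a legitimate alternative choice of starting bit run within the same circuit code; the proof therefore cannot derive an outright contradiction from the distance constraints and must instead exhibit the explicit cyclic shift and relabeling that returns the representation to the canonical form with $\beta_2 = 4$.
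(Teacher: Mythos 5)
Your argument is correct up to the point where you reach the configuration $\beta_1=2$, $\beta_3=4$, $\beta_2\in[k+4,d]$, but the final sub-case --- which is the heart of the lemma --- has a genuine gap. You assert that cyclically shifting $T(C)$ to begin at the bit run $(3,\ldots,k+l,2,\beta_2)$ and relabeling so that it becomes $(1,\ldots,k+l)$ yields a representation in form (\ref{eq6}) with the new $\beta_2$ equal to $4$. Trace what this shift actually produces: the transitions following the shifted length-$(k+3)$ bit run are $4,\beta_4,\beta_5,\ldots$, so under the relabeling $\sigma$ (which sends the $i$th entry of the bit run to $i$) the new code has $\beta_1'=\sigma(4)=2$ but $\beta_2'=\sigma(\beta_4)$. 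You therefore need $\beta_4=6$ (the fourth entry of the shifted bit run), and nothing you have established forces this; $\beta_4\in[k+5,d]$ is fully consistent with every constraint derived so far. Indeed, the paper's Example \ref{ex1} exhibits exactly such a code in $\mathcal{F}(11,6,9)$, with $\omega_2=(2,10,4,11,6,8)$, i.e.\ $\beta_4=11=k+5$. Applying your shift to that code gives $\beta_2'=\sigma(11)\in\{10,11\}$, not $4$: you land in precisely the same ``bad'' configuration you started from ($\beta_1'=2$, $\beta_2'\in[k+4,d]$, $\beta_3'=4$), so a single shift does not terminate the argument.

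This is exactly the difficulty the paper's proof spends its third through fifth paragraphs resolving: it proves by induction that the alternating pattern $\beta_i=i+1$ ($i$ odd), $\beta_i=k+3+\frac{i}{2}$ ($i$ even) must persist until, at some minimal even index $t$, two consecutive even elements of $[k+3]$ appear as $\beta_{t+1}=t+2$ and $\beta_{t+2}=t+4$; this must eventually happen because $[k+4,d]$ contains fewer than $k/2$ elements. Only then is the cyclic shift performed, starting at position $t+1$, so that the length-$(k+3)$ bit run of (\ref{eq10}) is followed by its own 2nd and 4th entries, which relabel to $2$ and $4$. Your closing paragraph correctly recognizes that this sub-case cannot be killed by a distance contradiction and needs an isomorphism argument, but the phrase ``an appropriate such alternative bit run'' silently does all the real work: locating that bit run is the induction you are missing. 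A minor further point: your identification $\omega_1\cap\omega_2\setminus\{2,4\}=\{6,8,\ldots,k+2\}$ is not justified (Lemma \ref{lem2} gives no information about $k+2$ or $k+3$, and Remark \ref{rem1} permits one of them to lie in $S_2$); this is harmless, since your $\delta$-computation in that sub-case works verbatim for any $\beta_2\in[2,k+3]$, which is how the paper states it, but the case split should be phrased that way to be exhaustive.
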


\begin{proof}
Let $S_1$ and $S_2$ be defined as in the statement of Lemma \ref{lem2}, and let $r,s,n_1'$, and $n_2'$ be defined as in the statement of Lemma \ref{lem3_1}.  We begin with some elementary observations: for $i\in \{1,\ldots,k\}$ the segment $\psi_i=(\beta_i,\ldots,\beta_k,1,\ldots,i)$ of $T(C)$ has length $k+1$ and so by (\ref{eq3}) is a bit run;
the $k+1$ transitions in $T(C)$ following $(\omega_1,\omega_2)$ are $(1,2,\ldots,k+1)$, so by 
Lemma \ref{lem2} we have $2,4,\ldots,2\lfloor \frac{k}{2}\rfloor \in S_2$ (and thus in $\omega_2$), and $1,2,\ldots,2 \lceil \frac{k}{2} \rceil -1 \in S_1 \cap \omega_1$ (and thus not in $\omega_2$).
Thus we see that $2\in \omega_2$, but $2\not\in (\beta_2,\ldots,\beta_k)$ (since $\psi_2$ is a bit run), hence $\beta_1=2$.
By similar logic, we deduce either $\beta_2=4$ or $\beta_3=4$.
At this point we have established that assumption (\ref{eq7}) holds when $l=2$ or if $l=3$ and $\beta_2=4$, thus we may assume that $l=3$ and $\beta_3=4$, giving us 
$(\omega_1,\omega_2)=(1,2,\ldots,k+3,2,\beta_2,4,\beta_4,\ldots,\beta_k)$.

The segment $\omega=(2,\ldots,k+3,2,\beta_2,4)$ has $|\omega|=k+5$, so by (\ref{eq4}), $\delta(\omega)\ge k$.
Since ${(2,\ldots,k+3)}\subseteq \omega_1$ and $(2,\beta_2,4)\subseteq \omega_2$ are bit runs, no transition element in $\omega$ occurs more than twice, 
which implies $\delta(\omega)=|\omega|-2\cdot|[2,k+3]\cap (2,\beta_2,4)|$.
Thus $\delta(\omega)\le k+1$ as $2 \text{ and }4 \in [2,k+3]$.
Hence $\beta_2\not\in [2,k+3]$ (else $\delta(\omega)<k$), and since $\psi_1$ is a bit run, $\beta_2 \neq 1$, so without loss of generality $\beta_2=k+4$.
We proceed by showing that the pattern $\beta_i=i+1$ if $i$ odd and $\beta_i=k+3+\frac{i}{2}$ if $i$ even persists until some $\beta_i$ and $\beta_{i+1}$ are consecutive even elements of $[k+3]$, and then we use this structure to show that 
$T(C)$ is isomorphic to some transition sequence $T'$ satisfying (\ref{eq6}) and (\ref{eq7}).
By this, we mean $T'=(1,2,\ldots,k+3,\beta_1',\ldots,\beta_k',1,2,\ldots,k+3,\beta_1',\ldots,\beta_k')$
where: $(\beta_1',\ldots,\beta_k')$ is a bit run, and $\beta_1'=2$ and $\beta_2'=4$.
Note that since isomorphism preserves symmetry, length, and spread (so any segment of length $\le k+1$ is a bit run), it will be sufficient to demonstrate an isomorphic transition sequence $T'$ whose first $k+5$ transitions are $(1,2,\ldots,k+3,2,4)$.

Suppose there exists an even integer $t$, $t\le k-2$, such that $\forall i\le t: \beta_i=i+1$ if $i$ odd, and $\beta_i=k+3+\frac{i}{2}$ if $i$ even.  By the case assumption that $l=3$ and $\beta_3=4$, we have established this is true for $t=2$.  We claim $\beta_{t+1}=t+2$ and $\beta_{t+2}=t+4$ or $k+3+\frac{t+2}{2}$.  
Since $t$ is even and $t \le k-2$, $t+2\in \omega_2$ as previously observed.  
And, since $\psi_{t+2}$ is a bit run, the only place where $t+2$ fits into $\omega_2$ is $\beta_{t+1}=t+2$.
Now either $\beta_{t+2}\not\in[k+3]$ (in which case, without loss of generality, $\beta_{t+2}=k+3+\frac{t+2}{2}$ and we are done), or $\beta_{t+2}\in [k+3]$.  
By Remark \ref{rem1} we have $|[k+3]\cap [\beta_k]|=n_2-1=d-k-1=\frac{k+2}{2}$ (here we use the case assumption $l=3$).
Now suppose $\beta_{t+2}\in [k+3]$ and decompose $|[k+3]\cap [\beta_k]|$ into $|[t+2]\cap [\beta_k]| + |\{t+3,t+4\} \cap [\beta_k]| + |[t+5,k+3] \cap [\beta_k]|$.
Using the values we established for $\beta_1,\ldots,\beta_{t+1}$ and Lemma \ref{lem3_1} (with $s=t+2$) we have 
$|[t+2]\cap [\beta_k]|=\frac{t}{2}+1$, and by using Lemma \ref{lem3_1} with $r=t+5$ we have $|[t+5,k+3]\cap [\beta_k]|\le \lfloor \frac{1}{2}(k-1-t)\rfloor = \frac{1}{2}(k-t-2)$, thus $t+3$ or $t+4\in [\beta_k]$.
Using Lemma \ref{lem3_1} with $s=t+3$ yields $|[t+3]\cap [\beta_k]|\le \frac{t}{2}+1$, so $t+3\not\in [\beta_k]$ as 
$|[t+2]\cap [\beta_k]|=\frac{t}{2}+1$.  
Hence $t+4\in[\beta_k]$, and since $\psi_{t+4}$ is a bit run this means $t+4\in \{\beta_{t+2},\beta_{t+3}\}$.
Finally, since we presume $\beta_{t+2}\in [k+3]$ we must have $\beta_{t+2}=t+4$, otherwise 
$\omega=(2,3,\ldots,k+3,2,k+4,\ldots,t,k+3+\frac{t}{2},t+2,\beta_{t+2},t+4)=(2,\ldots,k+3,\beta_1,\ldots,\beta_{t+3})$ 
is a segment of $T(C)$ with $k\le |\omega|=k+t+5 \le N-k$
and $\delta(\omega)=|\omega|-2\cdot |[2,k+3]\cap [\beta_{t+3}]|=k+t+5-2(\frac{t}{2}+3)=k-1$, violating (\ref{eq4}).

As there are $d-(k+l)<k/2$ transition elements in $[k+l+1,d]$ it follows from the preceding paragraph that there is some minimum index $t$ ($t$ even) for which $\beta_{t+1}$ and $\beta_{t+2}$ are consecutive even elements of $[k+3]$, specifically $\beta_{t+1}=t+2$ and $\beta_{t+2}=t+4$.  Therefore the first $(k+3)+(t+2)$ transitions of $T(C)$ are:

\begin{equation}
\label{eq9}
(1,2,\ldots,k+3,2,k+4,4,k+5,\ldots,t,k+3+t/2,t+2,t+4).
\end{equation}
Also observe that the segment $\omega$ of $T(C)$ defined by:
\begin{equation}
\label{eq10}
\omega=(t+1,t+2,\ldots,k+3,2,k+4,\ldots,\beta_{t-1}=t,\beta_t=k+3+t/2)
\end{equation}
is a bit run and has length $k+3$.  
We now show that $T(C)$ satisfies (\ref{eq7}) by demonstrating an isomorphic transition sequence $T'$ for which (\ref{eq7}) clearly holds.
Recall that cyclically shifting $T(C)$ or applying an automorphism of $[d]$ to $T(C)$ results in a transition sequence $T'$ isomorphic to $T(C)$.
First, cyclically shift $T(C)$ to get a new transition sequence, $\hat{T}$, that begins with the segment $\omega$ from (\ref{eq10}).  
Following from (\ref{eq9}), the first $k+5$ transitions of $\hat{T}$ are: $\omega,t+2,t+4$.
Now apply to $\hat{T}$ any automorphism of $[d]$ that maps the $i$th transition of $\hat{T}$ to $i$ for $1\le i \le k+3$ (since $\omega$ is a bit run of length $k+3$ at least one such automorphism exists).
The resulting transition sequence $T'$ is isomorphic to $T(C)$, and the first $k+5$ transitions of $T'$ are: 
$1,2,\ldots,k+3,2,4$, hence $T'$ satisfies (\ref{eq6}) and  (\ref{eq7}).
\end{proof}

The following example illustrates the isomorphism transformation presented in the final paragraph of Lemma \ref{lem3_2}.

\begin{example}[A nontrivial isomorphism when $l=3$]
\label{ex1}
Consider the circuit codes $C$ and $C'\in \mathcal{F}(11,6,9)$
with transition sequences $T(C)=(\omega_A,\omega_A)$ and $T(C')=(\omega_B,\omega_B)$ where:
\small
\[
\omega_A=1,2,3,4,\underbrace{5,6,7,8,9,2,10,4,11}_{\omega},6,8
\normalsize
\text{ and }
\small
\omega_B=1,2,3,4,5,6,7,8,9,2,4,10,6,11,8
\]
\normalsize
\noindent and $\omega$ indicates the segment from (\ref{eq10}).
Note that $T(C')$ satisfies Lemma \ref{lem3_2} and that $T(C)$ is isomorphic to $T(C')$ via the automorphism: $\sigma(1)=10, \sigma(2)=6, \sigma(3)=11, \sigma(4)=8, \sigma(5)=1, \sigma(6)=2, \sigma(7)=3, \sigma(8)=4, \sigma(9)=5, \sigma(10)=7, \sigma(11)=9$.  If we had chosen the automorphism $\tau(i)=\sigma(i), i\not\in \{1,3\}$, $\tau(1)=11, \tau(3)=10$ we would also get a transition sequence satisfying Lemma \ref{lem3_2} (and also isomorphic to $T(C')$).
\end{example}

Recall, if $C$ is a symmetric circuit code in $\mathcal{F}(d,k,k+l)$ of length $S(d,k,k+l)=4k+2l$, then $T(C)$ has the form $(\omega_1,\omega_2,\omega_1,\omega_2)$ as in (\ref{eq6}).
If $T(C)$ also satisfies assumption (\ref{eq7}) then the first $l-1$ transitions of $\omega_2$ are 
$\omega_2^A=(2,4,\ldots,2(l-1))$ and therefore $T(C)$ has the form:
\begin{equation}
\label{eq11}
(\omega_1,\omega_2^A,\underbrace{\alpha_1,\alpha_2,\ldots,\alpha_{2m}}_{\omega_2^{B'}},\omega_1,\underbrace{\omega_2^A,\omega_2^{B'}}_{\omega_2})
\end{equation}
where $m=\frac{1}{2}(k-l+1)$ and $(\omega_2^A,\omega_2^{B'})=\omega_2$.
In the next lemma we show that $\omega_2^{B'}$ equals the segment $\omega_2^B$ defined in (\ref{eq8}), and thus $T(C)$ is isomorphic to (\ref{eq8}).

\begin{lem}
\label{lem3_4}
Let $C$ be a symmetric circuit code in $\mathcal{F}(d,k,k+l)$ having length $S(d,k,k+l)$ and suppose that $T(C)$ satisfies (\ref{eq7}).  Then $T(C)$ is isomorphic to (\ref{eq8}).
\end{lem}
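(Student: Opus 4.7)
The plan is to determine $\omega_2^{B'}$ in three stages and then apply an automorphism. Stage~(i) identifies $\omega_2$ as a set, stage~(ii) determines which $\alpha_i$ are ``new'' elements of $[k+l+1, d]$ versus ``old'' elements of $[k+l]$, and stage~(iii) pins down the exact old values; an automorphism of $[d]$ fixing $[k+l]$ pointwise and mapping each $\alpha_{2j-1}$ to $k+l+j$ then converts $T(C)$ into (\ref{eq8}).

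For stage~(i), I would observe that the only role of the constraint $m+1 \le k+1$ in the proof of Lemma~\ref{lem2} is to ensure, via (\ref{eq3}), that $(\alpha_1, \ldots, \alpha_{m+1})$ is a bit run. In our symmetric setting the first $k+l$ transitions following $(\omega_1, \omega_2)$ equal $\omega_1 = (1, 2, \ldots, k+l)$, which is itself a bit run, so the same inductive argument extends through $i \le k+l$. Since $k+l$ is odd, this places every odd integer of $[1, k+l]$ in $S_1 \cap \omega_1$ (hence outside $\omega_2$) and every even integer in $S_2$ (hence inside $\omega_2$); combined with Remark~\ref{rem1} this forces $\omega_2 \cap [k+l] = \{2, 4, \ldots, k+l-1\}$ and $\omega_2 \cap [k+l+1, d] = [k+l+1, d]$.

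For stage~(ii), I would apply Lemma~\ref{lem2} to the cyclic shift of $T(C)$ whose first transition is $\tau_{k+2l}$. The stage~(i) description of $\omega_2$ makes $\omega_1' := (\alpha_1, \ldots, \alpha_{2m}, 1, 2, \ldots, 2l-1)$ a bit run of length $k+l$, and the next $k$ shifted transitions form $\omega_2' = (2l, 2l+1, \ldots, k+l, 2, 4, \ldots, 2(l-1))$. A direct set computation yields $\omega_1' \cap \omega_2' = \{2, 4, \ldots, k+l-1\}$, hence $S_2'$ equals the set of even integers in $[k+l]$ and $S_1' \cap \omega_1' = \{1, 3, \ldots, 2l-1\} \cup [k+l+1, d]$. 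The $k+1$ shifted transitions following $(\omega_1', \omega_2')$ begin with $\alpha_1, \alpha_2, \ldots, \alpha_{2m}$ and then wrap into $1, 2, \ldots$; since each $\alpha_i$ lies in $\omega_2$ and $\omega_2$ contains no odd integer of $[1, k+l]$, the alternation conclusion of Lemma~\ref{lem2} forces $\alpha_{2i-1} \in [k+l+1, d]$ and $\alpha_{2i} \in \{2, 4, \ldots, k+l-1\}$ for every $i \in \{1, \ldots, m\}$.

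For stage~(iii), I would induct on $j$ to show $\alpha_{2j} = 2(l+j-1)$. Assuming the claim for $\alpha_2, \ldots, \alpha_{2j}$, I would apply (\ref{eq3}) to the segment $\omega' = (\tau_{2l+2j+1}, \ldots, \tau_{k+2l+2j+1})$ of length $k+1$; the identity $\delta(\omega') = |\omega'| - 2 \bigl| [2l+2j+1, k+l] \cap \{\beta_1, \ldots, \beta_{l+2j+1}\} \bigr|$ together with $\delta(\omega') = k+1$ forces the intersection to be empty. The inductive hypothesis and stage~(ii) locate every $\beta_i$ with $i \le l+2j+1$ outside $[2l+2j+1, k+l]$ with the only possible exception of $\alpha_{2j+2}$, so $\alpha_{2j+2} \notin [2l+2j+1, k+l]$; combined with stage~(ii) and the bit-run property of $\omega_2$ this pins $\alpha_{2j+2} = 2l+2j$. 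I expect stage~(ii) to be the main obstacle: verifying the bit-run condition for $\omega_1'$, identifying $S_1'$ and $S_2'$, and matching the shifted alternation back to the $\alpha_i$ all require careful bookkeeping built on stage~(i).
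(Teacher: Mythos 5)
Your proof is correct, but it reaches the two key facts --- the alternation of the $\alpha_i$ between $[k+l+1,d]$ and $[k+l]$, and the exact values $\alpha_{2p}=2(l-1+p)$ --- by a genuinely different route than the paper. For the alternation, the paper argues by contradiction from a minimal index $i$ at which two consecutive $\alpha$'s lie in the same class, using ad hoc $\delta$-computations on rearranged segments; you instead first determine $\omega_2$ as a set by pushing the induction of Lemma~\ref{lem2} from $k+1$ steps out to $k+l$ steps (legitimate in the symmetric maximum-length setting because the transitions following $(\omega_1,\omega_2)$ are the bit run $\omega_1$ itself), and then reapply Lemma~\ref{lem2} verbatim to the cyclic shift beginning at $\alpha_1$. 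This buys you strictly more information than the paper's part~(i) --- namely that each $\alpha_{2i}$ is an \emph{even} element of $[k+l]$, not merely an element of $[k+l]$ --- which in turn lets you replace the paper's part~(ii) (an induction combining Lemma~\ref{lem3_1} for the lower bound with a segment-length argument via (\ref{eq3}) for the upper bound) by a cleaner induction: (\ref{eq3}) applied to a length-$(k+1)$ segment gives $\alpha_{2j+2}\notin[2l+2j+1,k+l]$, and distinctness within the bit run $\omega_2$ supplies the lower bound, so Lemma~\ref{lem3_1} is not needed at all. Two small points deserve explicit verification in a write-up: the extension of Lemma~\ref{lem2} requires not only that $(\alpha_1,\ldots,\alpha_{m+1})$ remain a bit run but also that the segments fed to (\ref{eq4}) stay within the window $[k,N-k]$ --- at the final step $m+1=k+l$ the segment $(\omega_1,\omega_2,\alpha_1,\ldots,\alpha_{k+l})$ has length exactly $N-k=3k+2l$, so this holds with equality; and the cyclic-shift application needs the bit-run property of $\omega_1'=(\alpha_1,\ldots,\alpha_{2m},1,\ldots,2l-1)$, which does follow from your stage~(i) description of $\omega_2$ as you indicate. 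With those checks spelled out, the argument is complete.
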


\begin{proof}
As observed, for any such circuit code $C$, we may assume $T(C)$ has the form (\ref{eq11}).
We now show: 
(i) $\alpha_i\in [k+l+1,d]$ if $2\nmid i$ and $\alpha_i\in [k+l]$ if $2\mid i$, for $1\le i\le 2m$, and
(ii) $\alpha_{2p}=2(l-1+p)$ for $1\le p\le m$.
Together these will imply that $\omega_2^{B'}=\omega_2^B$ and so $T(C)$ is isomorphic to (\ref{eq8}).

To prove (i) we claim that $\alpha_1=k+l+1$ and that $\alpha_i$ and $\alpha_{i+1}$ alternate as members of $[k+l+1,d]$ and $[k+l]$ for $1\le i \le 2m-1$.
Following from Lemma \ref{lem1}, all of the $m\ {(=d-(k+l))}$ transition elements in $[k+l+1,d]$ must appear in $\omega_2$ and exactly once (as $\omega_2$ is a bit run).
Furthermore, since $(\omega_2^A,\omega_2^{B'})=\omega_2$, and $\omega_2^A\cap [k+l+1,d]=\varnothing$, all of these transition elements appear in $\omega_2^{B'}$.
Consider $\omega=(2,3,\ldots,k+l,\omega_2^A)$, then $\delta(\omega)=k$ so by (\ref{eq2}) and (\ref{eq4}), $\delta(\omega,\alpha_1)=k+1$.
This means $\alpha_1$ occurs an even number (either 0 or 2) times in $\omega$, implying: $\alpha_1\in {\{2,3,\ldots,k+l\}} \cap \omega_2^A$, impossible since $(\omega_2^A,\alpha_1)$ is a bit run by (\ref{eq3});
$\alpha_1=1$, impossible since $(\omega_2^{B'},1)$ is a bit run by (\ref{eq3});
or $\alpha_1\in [k+l+1,d]$.
Thus $\alpha_1\in [k+l+1,d]$ and without loss of generality, $\alpha_1=k+l+1$.

Assume for contradiction that there exists a minimum $i\in \{1,\ldots,2m-1\}$ for which $\alpha_i$ and $\alpha_{i+1}$ either both belong to $[k+l+1,d]$ or both belong to $[k+l]$.
If $2\nmid i$ then $\alpha_i$ and $\alpha_{i+1}\in [k+l+1,d]$.
Then $T(C)$ contains the segment $\omega$ of length $2k+l-2$, defined as 
$\omega=(\alpha_{i+2},\ldots,\alpha_{2m},\omega_1,\omega_2^A,\alpha_1,\ldots,\alpha_{i-1})$ if $i\le 2m-2$, and
$\omega=(\omega_1,\omega_2^A,\alpha_1,\ldots,\alpha_{2m-2})$ if $i=2m-1$.
In both cases, $\omega$ can be formed from $(\omega_1,\omega_2)$ by removing the transitions $\alpha_i$ and $\alpha_{i+1}$ and then rearranging the remaining transitions.
Since $\alpha_i,\alpha_{i+1}\in [k+l+1,d]$, each appears exactly once in $(\omega_1,\omega_2)$,
and therefore: $\delta(\omega)=\delta(\omega_1,\omega_2)-2=k-1$ (as $\delta(\omega_1,\omega_2)=k+1$ by Lemma \ref{lem1}), contradicting (\ref{eq4}).
If $2\mid i$ then $\alpha_i \text{ and } \alpha_{i+1}\in [k+l]$.
No transition element in $(\omega_1,\omega_2)$ occurs more than twice (by Lemma \ref{lem1}), and
because Lemma \ref{lem2} implies only even elements of $[k+1]$ occur twice in $(\omega_1,\omega_2)$ (note the usage of the notation $\alpha_j$ differs there), no $\alpha_j\in [k+l]$ is equal to 1.
Thus if $\alpha_j\in [k+l]$ then $\alpha_j$ occurs twice in $(2,3,\ldots,k+l,\omega_2^A,\omega_2^{B'})$, and if $\alpha_j\in [k+l+1,d]$ then $\alpha_j$ occurs once in $(2,3,\ldots,k+l,\omega_2^A,\omega_2^{B'})$.
For $j\in \{0,1,\ldots,i-1\}$ define $\psi_j=(2,3,\ldots,k+l,\omega_2^A,\alpha_1,\ldots,\alpha_{j+1})$.
By definition of $i$, for any $j\in [i-1]$, $\alpha_j$ and $\alpha_{j+1}$ alternate as members of $[k+l+1,d]$ and $[k+l]$, beginning with $\alpha_1=k+l+1$.
This implies $\delta(\psi_j)=k$ if $2\nmid j$, and $\delta(\psi_j)=k+1$ if $2\mid j$.
In particular, for $\psi_{i-1}=(2,3,\ldots,k+l,\omega_2^A,\alpha_1,\ldots,\alpha_i)$ we have $\delta(\psi_{i-1})=k$,
and since $\alpha_{i+1}\in [k+l]$ by the case assumption, $\alpha_{i+1}$ occurs twice in the segment  $(\psi_{i-1},\alpha_{i+1})$ of $T(C)$.
Thus by (\ref{eq2}), $\delta(\psi_{i-1},\alpha_{i+1})=\delta(\psi_{i-1})-1=k-1$, and again we contradict (\ref{eq4}).
Therefore no such index $i$ exists, proving (i).

We prove (ii) by induction.
Since $(\omega_2^A,\omega_2^{B'})$ is a bit run, all of $\{\alpha_1,\ldots,\alpha_{2m}\}$ are distinct and none 
of the attendant transition elements appear in
$\omega_2^A=(2,4,\ldots,2(l-1))$.
In particular, from $(i)$ we have $\alpha_i\in [k+l+1,d]$ if $i$ is odd and $\alpha_i\in [k+l]$ if $i$ is even.
For $p\in \{1,\ldots,m\}$ let $t(2p)$ denote the value in $[k+l]$ assumed by $\alpha_{2p}$.
Define the segment $\omega(2p)$ of $T(C)$ as:
$\omega(2p)=(t(2p),t(2p)+1,\ldots,k+l,\omega_2^A,\alpha_1,\ldots,\alpha_{2p})$,
then $|\omega(2p)|=k+2l+2p-t(2p)$.
Observe that $\omega(2p)$ has at least one repeated transition element (e.g. $t(2p)=\alpha_{2p}$), so (\ref{eq3}) implies $k+2\le |\omega(2p)|$,
hence $\alpha_{2p}=t(2p)\le 2(l-1+p) \ \forall p\in \{1,\ldots,m\}$.

If $p=1$, applying Lemma \ref{lem3_1} with $s=2l-1$ yields
$|(\omega_2^A,\omega_2^{B'})\cap [2l-1]|\le \lfloor \frac{2l-1}{2}\rfloor = l-1$.
Since $|\omega_2^A \cap [2l-1]|=l-1$ by construction, this implies
$|\omega_2^{B'}\cap [2l-1]|=0$, and hence $\alpha_{2p}\ge 2l$.
Thus $\alpha_2=2l$, establishing the base case.
Now suppose that $\alpha_{2p}=2(l-1+p) \ \forall p\le r$ for some $r\in \{1,\ldots,m-1\}$, and let $p=r+1$.
Then, applying Lemma \ref{lem3_1} with $s=2(l-1+p)-1$ yields
$|(\omega_2^A,\omega_2^{B'})\cap [2(l-1+p)-1]|\le l-2+p = l-1+r$.
By assumption, $|(\omega_2^A,\alpha_1,\ldots,\alpha_{2r})\cap [2(l-1+r)]|=l-1+r$,
therefore:  $|(\omega_2^A,\alpha_1,\ldots,\alpha_{2r})\cap [2(l-1+p)-1]|=l-1+r$ (following from the prior upper bound since
$[2(l-1+r)]\subseteq [2(l-1+p)-1]$), and
$|(\alpha_{2r+1},\ldots,\alpha_{2m})\cap [2(l-1+p)-1]|=0$,
so $\alpha_{2p}\ge 2(l-1+p)$.
Thus $\alpha_{2p}=2(l-1+p)$, completing the inductive step and the proof of $(ii)$.

Finally, by (i), $\alpha_i\in [k+l+1,d]$ for odd $i$.
Since each transition element in $[k+l+1,d]$ appears once in $(\omega_1,\omega_2)$ (specifically in $\omega_2^{B'}$), we may assume without loss of generality that 
$\alpha_i=k+l+\lceil \frac{i}{2} \rceil$ for odd $i\in \{1,\ldots,2m\}$.
By (ii), $\alpha_i=2(l-1+\frac{i}{2})$ for even $i\in \{1,\ldots,2m\}$.
Hence for $i\in \{1,\ldots,2m\}$: $\alpha_i=k+l+\lceil \frac{i}{2}\rceil=\gamma_{\lceil i/2 \rceil}$ if $i$ is odd,
and $\alpha_i=2(l-1+\frac{i}{2})=\epsilon_{i/2}$ if $i$ is even,
where $\gamma_j$ and $\epsilon_j$ are as defined in (\ref{eq8}).
Thus $\omega_2^{B'}=\omega_2^B$ and $T(C)$ is isomorphic to (\ref{eq8}).
\end{proof}

\begin{proof}[\textbf{Proof of Theorem \ref{byrnes2}}]
Claim (i) of the theorem is established for odd $k$ by Theorem \ref{byrnes1}, and for even $k$ by Corollary \ref{cor1}. 
Claim (ii) is established by Lemma \ref{lem3_2} and Lemma \ref{lem3_4}, which collectively prove that, for $l=2 \text{ or }3$, if $C$ is a symmetric circuit code in $\mathcal{F}(d,k,k+l)$ of length $S(d,k,k+l)$ then $T(C)$ is isomorphic to the transition sequence (\ref{eq8}).  Hence all such circuit codes are isomorphic.
\end{proof}

\section{Conclusions and Conjectures}
In this article (assuming the ``usual'' conditions on $d,k, \text{and } l$, i.e. as articulated in Section 3) we used the conditions of symmetry and the existence of a long bit run to prove two results: (i) the formula $S(d,k,k+l)=4k+2l$ holds for even values of $k$ (extending Theorem \ref{byrnes1}), and (ii) when $l=2 \text{ or } 3$ all maximum length symmetric circuit codes in $\mathcal{F}(d,k,k+l)$ are isomorphic (extending Theorem 4 of \cite{Douglas2}).
Taken together, these two results (combined in Theorem \ref{byrnes2} parts (i) and (ii)) establish that circuit codes satisfying these conditions (i.e. the symmetric members of $\mathcal{F}(d,k,k+l))$ represent one of the few families of non-trivial circuit codes that are well-understood.

The contribution of Theorem \ref{byrnes2} (i) is clear, however it may not immediately be obvious how Theorem \ref{byrnes2} (ii) extends \cite{Douglas2} Theorem 4, as the latter does not explicitly require symmetry or a long bit run.
Recall that \cite{Douglas2} Theorem 4 states that when $k$ is odd and $d=\frac{1}{2}(3k+3)$, all $(d,k)$ circuit codes of length $K(d,k)=4k+4$ are isomorphic.
Let $C^*$ denote one such maximum length circuit code.
For $d=\frac{1}{2}(3k+3)$, the value $l=2$ satisfies the equation $d=\frac{1}{2}(3k+l+1)$.
And since $|C^*|=K(d,k)=4k+4>2(k+1)$, it follows from \cite{Singleton} Theorem 1 that $T(C^*)$ must possess a bit run of length $k+2=k+l$.
Hence $C^*\in \mathcal{F}(d,k,k+l)$ for $l=2$.
Furthermore, it is evident in the proof of \cite{Douglas2} Theorem 4 that for $k$ odd and $d=\frac{1}{2}(3k+3)$ all maximum length $(d,k)$ circuit codes are symmetric.
This means that the assumptions of Theorem \ref{byrnes2} are in fact implicitly satisfied for any odd $k\ge 5$, and in that sense \cite{Douglas2} Theorem 4 equates to the $l=2$ case of Theorem \ref{byrnes2} part (ii).
(This also indicates from where our explicit conditions in Theorem \ref{byrnes2} came.)

We remark that the analysis required for the $l=3$ case, while owing a great debt to the analysis of \cite{Douglas2}, is substantially more complex than the $l=2$ case, as can be seen from the fact that Lemma \ref{lem3_2} is proven for $l=2$ in the first paragraph.  The type of argument used in the concluding part of that proof (beginning in paragraph 4) appears novel in a circuit code context and may be of independent interest.

Before concluding, we investigate whether part (ii) of Theorem \ref{byrnes2} can be strengthened by increasing the range of $l$ (e.g. is it valid $\forall l\ge 2$?).
In Example \ref{ex3} we show that the result is not true in general for odd $l\ge 3$ by exhibiting non-isomorphic symmetric circuit codes in $\mathcal{F}(15,8,13)$ both having length $S(15,8,13)$.

\begin{example}
\label{ex3}
Consider the symmetric $(15,8,13)$ circuit codes $C$ and $C'$ with transition sequences $T(C)=(\omega_A,\omega_A)$ and $T(C')=(\omega_B,\omega_B)$ where:
\small
\[
\omega_A=(\underbrace{1,2,3,4,5,6,7,8,9,10,11,12,13,2,4,6,8}_{\omega},14,10,15,12)
\]
\[
\omega_B=(1,2,3,4,5,6,7,8,9,10,11,12,13,2,6,4,8,14,10,15,12).
\]
\normalsize
Notice $|C|=|C'|=42=S(15,8,13)$, but $C$ is not isomorphic to $C'$ as no segment of $T(C')$ is isomorphic to $\omega$.
Here the implied value of $l$ is 5, and isomorphism fails as assumption (\ref{eq7}) is violated.
In particular, since $l=5$, the second paragraph of the proof of Lemma \ref{lem3_2} need not hold, which allows for these non-isomorphic circuit codes.
\end{example}

Finally, we recall Conjecture 3.12 of \cite{Byrnes2019} which postulates (under the same conditions on $d,k, \text{and } l$ as in Section 3) that $S(d,k,k+l)=K(d,k)$ for $k$ odd and $l$ even.
When $k$ is odd and $l=2 \text{ or } 4$ this equality is confirmed by the formulas for $K(d,k)$ from \cite{Douglas2} Theorems 4 and 5.
Here we have extended the formula $S(d,k,k+l)=4k+2l$ to the case where $k$ is even and $l$ odd.
In particular, when $l=3$ and $k$ is even $\ge 4$, Theorem \ref{byrnes2} states 
$S(\frac{3k}{2},k,k+3)=4k+6$, coinciding with the value of $K(\frac{3k}{2},k)$ (for even $k$) from \cite{Douglas2} Theorem 3.
Further supporting evidence is provided by the best-known lower bound on $K(15,8)$ of 42 as reported in \cite{Byrnes}.
For $d=15$ and $k=8$, the value $l=5$ satisfies the formula $d=\frac{1}{2}(3k+l+1)$, and since $k=8\ge 2l-2$ we can apply Theorem \ref{byrnes2} (i) to get $S(15,8,13)=42$.
Hence we strengthen the conjecture to:

\begin{conj}
\label{conj1}
Let $k$ and $l$ be integers $\ge 2$ of opposite parity with $k\ge 2l+1$ if $k$ odd, $k\ge 2l-2$ if $k$ even, and $d=\frac{1}{2}(3k+l+1)$.
Then $S(d,k,k+l)=K(d,k)$.
\end{conj}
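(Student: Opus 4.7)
The plan is to split $K(d,k)=4k+2l$ into two inequalities. The lower bound $K(d,k)\ge 4k+2l$ is immediate from Theorem \ref{byrnes2}(i), since any symmetric circuit code is a circuit code and $S(d,k,k+l)=4k+2l$. The whole content of the conjecture therefore lies in the matching upper bound $K(d,k)\le 4k+2l$, i.e., in ruling out \emph{every} $(d,k)$ circuit code (symmetric or not) of length $\ge 4k+2l+2$.

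My first step would be to pin down the maximum bit run length of a hypothetical counterexample $C$. Applying Theorem \ref{dimension_lb} with $\varphi(C)=k+l'$ and $d=\frac{1}{2}(3k+l+1)$, together with the opposite-parity assumption on $k$ and $l$, forces $l'\le l$, while Singleton's bound gives $l'\ge 2$. The key substep is to upgrade this to $l'=l$, i.e., to show that any maximum length $C$ lies in $\mathcal{F}(d,k,k+l)$. I would attempt this by iterating Lemma \ref{lem1}-type parity counts at each bit run of length $k+l'$: if $l'<l$ then the transition-multiset structure on a segment of length $2k+l'$ is more constrained than Lemma \ref{lem1} provides, and these constraints should accumulate to force $|C|<4k+2l$. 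Once $C\in\mathcal{F}(d,k,k+l)$ is established, one normalizes $T(C)$ as in (\ref{eq5}) and invokes Lemma \ref{lem2} to determine the $k+1$ transitions following $(\omega_1,\omega_2)$ without any symmetry assumption.

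The next step is to adapt the contradiction in Corollary \ref{cor1} to the non-symmetric setting. In the symmetric case, the second half of $T(C)$ mirrors the first, which pinned down the segment $(3,\ldots,k+l,\omega_2,\alpha,1)$ and produced $\delta=k-1$ on a window of length in $[k,N-k]$, contradicting (\ref{eq4}). Without symmetry, one would re-apply Lemma \ref{lem2} at every cyclic shift of $T(C)$ whose first $k+l$ transitions form a bit run, hoping that these overlapping local windows together pin down enough of the global transition multiset to exhibit a segment of length in $[k,N-k]$ on which $\delta\le k-1$.

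The principal obstacle is exactly this global-from-local step: Lemma \ref{lem2} controls only $k+1$ transitions past a given $(\omega_1,\omega_2)$, which falls well short of the circumference $\ge 4k+2l+2$, and symmetry was the mechanism that bootstrapped local to global control in the proof of Corollary \ref{cor1}. Replacing it looks delicate. Any successful argument must be sensitive to $l$, since Example \ref{ex3} shows that the isomorphism conclusion of Theorem \ref{byrnes2}(ii) fails for $l\ge 5$; thus the general upper bound argument is likely to split into cases that mirror the case analysis in the proof of Lemma \ref{lem3_2} and will ultimately need to exploit a length-specific counting invariant that is not yet visible in the tools developed here.
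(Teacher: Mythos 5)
This statement is labelled a conjecture in the paper, and the paper offers no proof of it; it only records supporting evidence (the cases $l=2,4$ with $k$ odd via \cite{Douglas2}, the coincidence with $K(\frac{3k}{2}+2,k)$ when $l=3$ and $k$ is even, and the reported lower bound $K(15,8)\ge 42$). So there is nothing in the paper to compare your argument against, and your proposal must stand on its own. It does not: it is a research plan with two explicitly unfilled gaps, not a proof. Your easy half is fine --- $K(d,k)\ge S(d,k,k+l)=4k+2l$ follows from Theorem \ref{byrnes2}(i), and the parity computation showing that Theorem \ref{dimension_lb} forces $\varphi(C)\le k+l$ for any $(d,k)$ code with $|C|>2d$ is correct. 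But everything after that is conditional.

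Concretely, two steps are missing. First, the ``upgrade'' to $\varphi(C)=k+l$: you assert that if the longest bit run has length $k+l'$ with $l'<l$, then Lemma \ref{lem1}-type parity counts ``should accumulate'' to force $|C|<4k+2l$. Lemma \ref{lem1} is stated only for codes in $\mathcal{F}(d,k,k+l)$ with $d=\frac{1}{2}(3k+l+1)$ exactly matching the bit-run length; when $\varphi(C)=k+l'<k+l$ the dimension exceeds the Singleton bound for that bit-run length, the count $n_2=d-n_1$ no longer pins down the multiset structure, and no length bound follows from anything proved in the paper. Second, and more seriously, you acknowledge that Lemma \ref{lem2} controls only the $k+1$ transitions after a window of length $2k+l$, that symmetry is what converts this local control into the global contradiction in Corollary \ref{cor1}, and that you do not know how to replace it. Identifying the obstacle is not the same as overcoming it. As written, the proposal establishes only $K(d,k)\ge 4k+2l$ and leaves the conjecture exactly where the paper leaves it: open.
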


\footnotesize
\bibliographystyle{plain}
\bibliography{CC_Max_Length_and_Iso_Bib}
\normalsize

\end{document}